\newcommand{\vol}{\textnormal{vol}}
\def\f12{\frac 1 2}
\def\ga{\gamma}
\def\ep{\epsilon}
\def\cL{\mathcal{L}}
\def\Lb{\underline{L}}
\def\pa{\partial}
\def\les{\lesssim}
\def\cD{\mathcal{D}}
\def\R{\mathbb{R}}
\newcommand{\nabb}{\mbox{$\nabla \mkern-13mu /$\,}}
\newtheorem{thm}{Theorem}
\newtheorem{Prop}{Proposition}[section]
\newtheorem{Lem}{Lemma}[section]
\newtheorem{cor}{Corollary}
\newtheorem{Cor}{Corollary}[section]
\newtheorem{Remark}{Remark}[section]
\begin{document}

\title{On the global behaviors for defocusing semilinear wave equations in $\mathbb{R}^{1+2}$}

\author{Dongyi Wei \and Shiwu Yang}

\AtEndDocument{\bigskip{\footnotesize%
  \addvspace{\medskipamount}
  \textsc{School of Mathematical Sciences, Peking University, Beijing, China} \par
  \textit{E-mail address}: \texttt{jnwdyi@pku.edu.cn} \par

  \addvspace{\medskipamount}
  \textsc{Beijing International Center for Mathematical Research, Peking University, Beijing, China} \par
  \textit{E-mail address}: \texttt{shiwuyang@math.pku.edu.cn}
  }}

\date{}

\maketitle

\begin{abstract}
In this paper, we study the asymptotic decay properties for defocusing semilinear wave equations in $\mathbb{R}^{1+2}$ with pure power nonlinearity. By applying new vector fields to null hyperplane, we derive improved time decay of the potential energy, with a consequence that the solution scatters both in the critical Sobolev space and energy space for all $p>1+\sqrt{8}$. Moreover combined with  Br\'{e}zis-Gallouet-Wainger type of logarithmic Sobolev embedding, we show that the solution decays pointwise with sharp rate $t^{-\frac{1}{2}}$ when $p>\frac{11}{3}$ and with rate $t^{ -\frac{p-1}{8}+\ep }$ for all $1<p\leq \frac{11}{3}$. This in particular implies that the solution scatters in energy space when $p>2\sqrt{5}-1$.

\end{abstract}

\section{Introduction}
Consider the Cauchy problem to the defocusing semilinear wave equation
\begin{equation}
  \label{eq:NLW:semi:2d}
  \Box\phi=-\pa_t^2\phi+\Delta\phi= |\phi|^{p-1}\phi,\quad \phi(0, x)=\phi_0(x),\quad \pa_t\phi(0, x)=\phi_1(x)
\end{equation}
  in $\mathbb{R}^{1+2}$ with $1<p<\infty$. It is well known that this equation is energy subcritical and is local well-posed in energy space, see for example \cite{velo85:global:sol:NLW}. The energy conservation then immediately implies that the solution is indeed globally in time but without any asymptotic dynamics.

Due to the repulsive nature of the equation, it is expected that the solution should decay in some sense. It has been shown in the early work \cite{Pecher82:NLW:2d} of Glassey-Pecher that the solution decays as fast as linear solutions in the pointwise sense for the super-conformal case $p> 5$. The solution still decays as long as $1+\sqrt{8}<p\leq 5$ but with slower decay rate in time. As a consequence of these pointwise decay estimates, they also demonstrated that the solution scatters in energy space when $p>4.15$.

While the above scattering result (asymptotic completeness) is measured in energy space and heavily relies on the smoothness of the data, Ginibre-Velo in  \cite{Velo87:decay:NLW} established a complete scattering theory (asymptotic completeness and existence of wave operator) with data in conformal energy space for the super-conformal case $p>5$. However it is not clear whether such scattering theory can be extended beyond the conformal invariant power $p=5$. For a more comprehensive discussion on the global dynamics for defocusing semilinear wave equation in general dimension, we refer the interested readers to  \cite{Hidano:scattering:NLW}, \cite{yang:scattering:NLW} and references therein.

Both of the above mentioned results (indeed all the previous related works) relied on the approximate conservation of conformal energy derived by using the conformal Killing vector field as multiplier. Since the conformal Killing vector field is order $2$ with weights $t^2$, an effective estimate requires that the power $p$ can not be too small. A possible way to study the asymptotic behaviors for smaller $p$ is to use vector fields with less weights. This comes in the robust new vector field method introduced by Dafermos-Rodnianski in \cite{newapp}, which has been successfully applied to study the asymptotic decay properties for the energy subcritical defocusing semilinear wave equations in higher dimensions \cite{yang:scattering:NLW}. However this method fails for equation in dimension $2$ (see detailed discussion in the end of the introduction).

With drawback of the aforementioned methods to investigate the global dynamics of equation \eqref{eq:NLW:semi:2d} in $\mathbb{R}^{1+2}$, this paper is devoted to developing new approach to study linear and nonlinear wave equations in dimension $2$. To derive weighted energy estimates with order less than $2$, we apply new vector fields to regions bounded by null hyperplanes instead of null cones used in higher dimensions. The use of null hyperplane can be viewed as reduction of the equation to $\mathbb{R}^{1+1}$, on which the wave operator is invariant under a larger class of conformal symmetry (see \cite{Yang:NLW:1d}). This enables us to use vector fields of order $\frac{p-1}{2}$ for all $1<p<5$.

Another difficulty to study the decay of linear waves in $\mathbb{R}^{1+2}$ is the failure of the embedding $L^\infty \hookrightarrow H^1$. In space dimension $3$, the work \cite{yang:NLW:ptdecay:3D} made use of the weighted potential energy flux through backward light cone, which is sufficiently strong to control the nonlinearity. As pointed out above, such estimate is absent in dimension $2$. This suggests us to explore the weighted energy estimate for the derivatives of the solution in addition to the potential energy decay.  Since a priori the $H^2$ norm of solution to \eqref{eq:NLW:semi:2d} grows at most polynomially in time $t$, we make use of Br\'{e}zis-Gallouet-Wainger inequality (logarithmic Sobolev embedding) to show that the solution decays in time $t$ for all $1<p<5$.

To state our main theorems, recall from \cite{yang:scattering:NLW} that the critical exponent
\[
s_p=\frac{p-3}{p-1}
\]
for Sobolev space. In particular, equation \eqref{eq:NLW:semi:2d} is energy subcritical for all $1<p<\infty$, and is conformal invariant when $p=5$ (corresponding to $s_p=\frac{1}{2}$). We also recall the linear operator  $\mathbf{L}(t)$
\[
\mathbf{L}(t)(\phi_0(x), \phi_1(x))=(\phi(t, x), \pa_t\phi(t, x))
\]
with $\phi$ solving the linear wave equation $\Box\phi=0$, $\phi(0, x)=\phi_0$, $\pa_t\phi(0, x)=\phi_1$ on $\mathbb{R}^{1+2}$.

For constant $0\leq \ga\leq 2$ define the weighted energy norm of the initial data
\begin{align*}
  \mathcal{E}_{k,\ga}=\sum\limits_{l\leq k}\int_{\mathbb{R}^2}(1+|x|)^{\ga+2 l}(|\nabla^{l+1}\phi_0|^2+|\nabla^l \phi_1|^2)+(1+|x|)^{\ga}|\phi_0|^{p+1}dx.
\end{align*}
In this paper we will only use the initial conformal energy $\mathcal{E}_{0, 2}$ and the first order energy $\mathcal{E}_{1, 0}$.

Our first result is the time decay of the potential energy as well as the scattering results in critical Sobolev space.
\begin{thm}
  \label{thm:main:td}
  For solution $\phi$ to the defocusing semilinear wave equation \eqref{eq:NLW:semi:2d} in $\mathbb{R}^{1+2}$ with subconformal power $1<p\leq 5$, the potential energy decays as follows
  \begin{align*}
    \int_{\mathbb{R}^2}|\phi(t, x)|^{p+1} dx\leq C\mathcal{E}_{0, 2} (1+t)^{-\frac{p-1}{2}}
    ,\quad \forall t\geq 0
  \end{align*}
  for some constant $C$ depending only on $p$.

   As a consequence for all $1+\sqrt{8}<p<5$, the solution $\phi$ scatters to linear solutions in the critical Sobolev space and energy space, that is, there exist pairs $\phi_0^{\pm}\in \dot H_x^{s_p}\cap \dot H_x^1$ and  $ \phi_1^{\pm}\in \dot H_x^{s_p-1}\cap L_x^2$ such that
\begin{align*}
      \lim\limits_{t\rightarrow\pm\infty}\|(\phi(t, x),\pa_t\phi(t,x))-\mathbf{L}(t)(\phi_0^{\pm}(x), \phi_1^{\pm}(x))\|_{\dot{H}_x^s\times \dot{H}_x^{s-1}}=0
\end{align*}
for all $s_p\leq s\leq 1$.
\end{thm}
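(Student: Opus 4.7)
My plan is to address the two claims of the theorem sequentially: first establish the pointwise-in-time decay of the potential energy via a new multiplier identity on null hyperplanes, then use this decay to derive scattering through a Duhamel argument.

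For the decay estimate I would avoid the standard conformal Killing multiplier $K_0=(t^2+r^2)\pa_t+2tr\pa_r$, which is unsuitable here for two reasons: it has weights of order $2$ (whereas the target rate $t^{-(p-1)/2}$ has order strictly less than $2$ in the subconformal range), and in $\mathbb{R}^{1+2}$ the associated correction terms involve an $L^2$ norm of $\phi$ that is not controlled by $\mathcal{E}_{0,2}$. Instead, for each direction $\omega\in S^1$ I would introduce the null coordinates $u_\omega=t-\omega\cdot x$, $v_\omega=t+\omega\cdot x$ and apply the multiplier $v_\omega^{(p-1)/2}L_\omega\phi$, with $L_\omega=\pa_t+\omega\cdot\nabla$, augmented by an appropriate lower-order correction, in the energy identity for $\Box\phi=|\phi|^{p-1}\phi$ on the wedge $\{0\leq u_\omega\leq c\}$ bounded by two null hyperplanes. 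The fractional weight $(p-1)/2<2$ is justified here precisely because $\Box$ along the direction $\omega$ reduces to a $\mathbb{R}^{1+1}$ wave operator with an enhanced conformal symmetry group. The identity should produce a uniform flux bound
\[
\int_{\{u_\omega=c\}}v_\omega^{(p-1)/2}\bigl(|L_\omega\phi|^2+|\phi|^{p+1}\bigr)\,d\sigma\;\lesssim\;\mathcal{E}_{0,2},
\]
and upon averaging over $\omega\in S^1$ and slicing at fixed time $t$, the inequality $v_\omega\gtrsim t$ (valid on a positive-measure set of directions for any $(t,x)$ with $|x|<t$) converts this into the claimed $\int_{\mathbb{R}^2}|\phi(t,x)|^{p+1}\,dx\lesssim(1+t)^{-(p-1)/2}\mathcal{E}_{0,2}$.

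For scattering, standard Duhamel reduces the claim in $\dot H^s\times\dot H^{s-1}$, $s\in[s_p,1]$, to the Cauchy criterion
\[
\int_S^T\||\phi|^{p-1}\phi\|_{\dot H^{s-1}_x}\,d\tau\to 0\quad(S,T\to\infty),
\]
which follows from the $\Box$ energy estimate. At $s=1$ this integrand is $\|\phi\|_{L^{2p}_x}^p$ and is bounded via Gagliardo--Nirenberg interpolation between $\|\phi\|_{L^{p+1}_x}\lesssim t^{-(p-1)/(2(p+1))}$ from the first step and $\|\nabla\phi\|_{L^2_x}\lesssim 1$ from energy conservation. At the critical endpoint $s=s_p$, one uses the Sobolev embedding $L^{2(p-1)/(p+1)}\hookrightarrow\dot H^{s_p-1}$ together with finite speed of propagation to bound $\|\phi\|_{L^q_x}$ for $q=2p(p-1)/(p+1)<p+1$ via the effective support $|x|\lesssim t$ and the decay of the $L^{p+1}$ norm; collecting the powers of $t$ gives an integrand decaying like $\tau^{-\beta(p)}$ with $\beta(p)>1$ exactly when $p^2-2p-7>0$, i.e. $p>1+\sqrt{8}$. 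The intermediate $s\in(s_p,1)$ then follow by interpolation between the two endpoints.

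The main obstacle is the flux estimate of the first step. In $\mathbb{R}^{1+2}$ a null hyperplane is genuinely two-dimensional, so the commutator of the multiplier $v_\omega^{(p-1)/2}L_\omega$ with $\Box$ generates transverse-derivative errors that are absent in the idealised $\mathbb{R}^{1+1}$ reduction; identifying the lower-order correction that absorbs these errors, and arranging that the one-directional fluxes indexed by $\omega\in S^1$ combine upon averaging to genuinely control the isotropic $L^{p+1}$ norm on $\mathbb{R}^2$ at each time slice, is the technical heart of the argument. A secondary subtlety is the bookkeeping in the scattering step: matching the threshold $1+\sqrt{8}$ sharply requires exploiting the support property together with the $L^{p+1}$ decay rather than running a single Sobolev/interpolation bound at a fixed endpoint.
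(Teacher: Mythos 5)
Your strategy for the decay estimate is essentially the one used in the paper: multipliers with fractional null weights of order $\frac{p-1}{2}$ applied to regions bounded by null hyperplanes, justified by the enhanced conformal structure of the $\mathbb{R}^{1+1}$ reduction, with the isotropic bound recovered by combining directions (the paper needs only the two reflections $x_1\to -x_1$ and $x_1\leftrightarrow x_2$ rather than an average over $S^1$; indeed your leading multiplier $v_\omega^{(p-1)/2}L_\omega$ coincides, after $\omega\to-\omega$ and a unit shift of the weight, with the paper's $u_1^{q}(\partial_t-\partial_1)$). However, the way you propose to extract the fixed-time bound does not work as stated: a flux bound through the null hyperplane $\{u_\omega=c\}$ is a spacetime integral over a codimension-one surface that meets each time slice in a line, so it cannot be ``sliced at fixed time'' to control $\int_{\mathbb{R}^2}|\phi(t,x)|^{p+1}dx$. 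The fixed-time potential energy must come from the constant-$t$ boundary term of the energy identity on a region of the form $\{x_1\leq t\}\cap\{0\leq t\leq T\}$, and the null-hyperplane flux is instead an \emph{input} there (it is one of the boundary terms, which the paper controls by two preliminary multiplier identities on the complementary region $\{t\leq x_1\}$). Relatedly, with the pure multiplier $v_\omega^{q}L_\omega$ the null flux through $\{u_\omega=c\}$ is $\int v_\omega^{q}|L_\omega\phi|^2$ and contains no $|\phi|^{p+1}$ term; producing the potential on the boundary requires the transverse components $u_1^{q-2}x_2^2(\partial_t+\partial_1)+2u_1^{q-1}x_2\partial_2$ and the Lagrangian correction $\chi=u_1^{q-1}$, which is precisely the ``lower-order correction'' you defer.

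The scattering step has a genuine gap. At the endpoint $s=1$ your Cauchy criterion requires $\int\|\phi(\tau)\|_{L^{2p}_x}^{p}\,d\tau<\infty$, but Gagliardo--Nirenberg between $\|\phi\|_{L^{p+1}}\lesssim \tau^{-\frac{p-1}{2(p+1)}}$ and $\|\nabla\phi\|_{L^2}\lesssim 1$ gives only $\|\phi\|_{L^{2p}}^{p}\lesssim \|\phi\|_{L^{p+1}}^{\frac{p+1}{2}}\|\nabla\phi\|_{L^2}^{\frac{p-1}{2}}\lesssim \tau^{-\frac{p-1}{4}}$, which is integrable only for $p>5$ and hence fails throughout the subconformal range; this is why the paper does not argue this way, and why energy-space scattering via $L^p_tL^{2p}_x$ in Corollary 1 needs the additional pointwise decay and only reaches $p>2\sqrt{5}-1$. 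The $s=s_p$ endpoint as written also breaks down for $2+\sqrt{5}<p<5$, where $q=\frac{2p(p-1)}{p+1}>p+1$, so one cannot pass from the $L^{p+1}$ decay to $L^q$ by H\"older on a bounded region (and ``effective support $|x|\lesssim t$'' is unavailable for data merely bounded in $\mathcal{E}_{0,2}$). The paper's route is different: it converts the weighted potential energy bound into the spacetime estimate $\|\phi\|_{L^{3(p-1)/2}(\mathbb{R}^{1+2})}\lesssim \mathcal{E}_{0,2}^{1/(p+1)}$ (the $\dot H^{s_p}$-critical Strichartz norm, finite exactly for $p>1+\sqrt{8}$) and then invokes a Strichartz-based scattering criterion (Proposition 5.1 of the cited reference), which yields convergence in $\dot H^s\times\dot H^{s-1}$ for all $s_p\leq s\leq 1$ simultaneously by subdividing time into intervals of small spacetime norm. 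You would need to adopt some version of that mechanism; the naive Duhamel integrability at $s=1$ cannot be salvaged from the potential energy decay alone.
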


\begin{Remark}
The superconformal case $p\geq 5$ has been well understood in \cite{Velo87:decay:NLW}, \cite{Pecher82:NLW:2d}. Since we are aiming at comparing the nonlinear solution with linear solutions, the larger power $p$ makes the problem easier due to the decay of the solution.
\end{Remark}

\begin{Remark}
The time decay of the potential energy obtained in \cite{Pecher82:NLW:2d} is
\begin{align*}
    \int_{\mathbb{R}^2}|\phi(t, x)|^{p+1} dx\leq C\mathcal{E}_{0, 2}(1+t)^{3-p}
    ,\quad \forall t\geq 0.
  \end{align*}
  For the subconformal case when $p<5$ we see that $p-3< \frac{p-1}{2}$. In particular we improved the time decay of the potential energy.
\end{Remark}

\begin{Remark}
The lower bound for the above scattering result is consistent with that in higher dimensions ($p(d)=\frac{1+\sqrt{d^2+4d-4}}{d-1}$, see the main theorem in \cite{yang:scattering:NLW}). However we still need to require that the initial data belong to the conformal energy space.
\end{Remark}

\begin{Remark}
Scattering in the critical Sobolev space $\dot{H}^{s_p}$ is motivated by the open problem that whether equation \eqref{eq:NLW:semi:2d} is globally well-posed in critical Sobolev space, see recent breakthrough \cite{Dodson:NLW:3d:p3}, \cite{Dodson:NLW:3d:p35} by Dodson in $\mathbb{R}^{1+3}$.
\end{Remark}

Based on the above time decay of the potential energy as well as Br\'{e}zis-Gallouet-Wainger inequality, we also obtain pointwise decay estimates for the solution.
\begin{thm}
\label{thm:main:pd}
 Consider the Cauchy problem to the defocusing semilinear wave equation \eqref{eq:NLW:semi:2d}. For initial data $(\phi_0, \phi_1)$ bounded in $\mathcal{E}_{0, 2}$ and $\mathcal{E}_{1, 0}$, the solution $\phi$ satisfies the following pointwise decay estimates for all $t\geq 0$:
\begin{itemize}
\item For the case when $\frac{11}{3}<p<5$, the solution enjoys the sharp time decay estimates
\begin{equation*}
|\phi(t, x)|\leq C (1+t+|x|)^{-\frac{1}{2}}
\end{equation*}
for some constant $C$ depending only on $p$, $\mathcal{E}_{0, 2}$ and $\mathcal{E}_{1, 0}$.
\item Otherwise for all $1<p\leq \frac{11}{3}$ and $\ep>0$, the solution still decays
\begin{equation*}
|\phi(t, x)|\leq C_{\ep}  (1+t+|x|)^{-\frac{p-1}{8}+\ep  }
\end{equation*}
with $C_{\ep}$ depending on  $p$, $\ep$ and $\mathcal{E}_{0, 2}$ and $\mathcal{E}_{1, 0}$.
\end{itemize}
 \end{thm}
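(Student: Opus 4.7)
The plan is to combine the potential-energy decay from Theorem \ref{thm:main:td} with higher-order energy estimates and the Br\'{e}zis-Gallouet-Wainger (BGW) logarithmic Sobolev embedding in $\mathbb{R}^2$. From $\mathcal{E}_{1,0}<\infty$ together with a standard commutator energy estimate for the differentiated equation $\Box(\nabla\phi)=\nabla(|\phi|^{p-1}\phi)$, I would first derive a polynomial-in-time bound $\|\phi(t,\cdot)\|_{H^2}\leq C(1+t)^{N(p)}$. The growth is unavoidable because controlling $|\phi|^{p-1}\nabla^2\phi$ in $L^2$ requires an $L^\infty$ bound on $\phi$ that is only known with loss at this stage; only the polynomial character of the bound will be used, via the logarithm in BGW.

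Next, interpolating the potential-energy decay $\|\phi(t)\|_{L^{p+1}}\lesssim (1+t)^{-(p-1)/(2(p+1))}$ (from Theorem \ref{thm:main:td}) against the uniform bound $\|\phi(t)\|_{L^r}\lesssim 1$ (from $H^1$-energy conservation and the 2D Sobolev embedding $H^1\hookrightarrow L^r$, $r<\infty$) yields
\[
\|\phi(t,\cdot)\|_{L^q(\mathbb{R}^2)}\lesssim (1+t)^{-(p-1)/(2q)},\qquad q\geq p+1.
\]
Applying a BGW-type inequality of the schematic form
\[
\|u\|_{L^\infty(\mathbb{R}^2)}\lesssim \|u\|_{L^q}^{\theta}\,\|u\|_{H^2}^{1-\theta}\bigl(1+\log(e+\|u\|_{H^2})\bigr)^{1/2}
\]
with $\theta$ close to $1$, the polynomial growth of $\|\phi\|_{H^2}$ enters only through a factor of $\log(1+t)$, which is absorbed into an arbitrary $(1+t)^{\ep}$ loss. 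Choosing $q=4$ (which anticipates the final denominator $8$) and optimizing the interpolation exponents gives the pointwise bound $(1+t)^{-(p-1)/8+\ep}$.

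For the sharp decay $(1+t+|x|)^{-1/2}$ in the regime $p>\frac{11}{3}$, I would run a bootstrap using Duhamel's formula $\phi=\phi_L-\int_0^t S(t-s)(|\phi|^{p-1}\phi)(s)\,ds$ with $S(t)$ the 2D wave propagator. Feeding the intermediate rate of the previous step into the nonlinearity and exploiting the $t^{-1/2}$ dispersive decay of $S(t)$, the Duhamel integral produces a bound of size $t^{-1/2}$ exactly when $p>\frac{11}{3}$; the spatial weight $(1+t+|x|)^{-1/2}$ (as opposed to merely $(1+t)^{-1/2}$) is recovered from the conformal-weighted structure already built into the proof of Theorem \ref{thm:main:td} and finite speed of propagation. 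The main obstacle lies in the combined bookkeeping of the BGW step and the Duhamel bootstrap: one must simultaneously balance the decaying $L^q$ rate, the polynomially growing $H^2$ norm, and the logarithmic factor, while controlling the singular 2D wave kernel $((t-s)^2-|x-y|^2)^{-1/2}$; the threshold $p=\frac{11}{3}$ is exactly where the relevant Duhamel integral becomes borderline convergent.
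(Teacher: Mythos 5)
Your outline of the sharp-decay part ($p>\frac{11}{3}$) is broadly in the spirit of the paper's argument (representation formula, feeding in the intermediate rate, a Gronwall-type bootstrap on $\sup_x|\phi(t,x)|(1+t+|x|)^{1/2}$), but the first part --- the intermediate rate $(1+t)^{-\frac{p-1}{8}+\ep}$ --- contains a genuine gap, and since the sharp part consumes that rate as input, the gap propagates. The problem is the proposed inequality $\|u\|_{L^\infty}\lesssim \|u\|_{L^q}^{\theta}\|u\|_{H^2}^{1-\theta}\bigl(1+\log(e+\|u\|_{H^2})\bigr)^{1/2}$ ``with $\theta$ close to $1$''. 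No such inequality exists: testing with $u(x)=v(\lambda x)$, $\lambda\to\infty$, gives $\|u\|_{L^\infty}\sim 1$, $\|u\|_{L^4}\sim\lambda^{-1/2}$, $\|u\|_{H^2}\sim\lambda$, which forces $\theta\leq \frac{2}{3}$ for $q=4$ (this is exactly the Gagliardo--Nirenberg exponent, where no logarithm is even needed). With the admissible $\theta=\frac{2}{3}$ and your inputs $\|\phi\|_{L^4}\lesssim(1+t)^{-\frac{p-1}{8}}$ and $\|\phi\|_{H^2}\lesssim(1+t)^{1+\ep}$, you obtain $\|\phi\|_{L^\infty}\lesssim(1+t)^{-\frac{p-1}{12}+\frac{1}{3}+\ep}$, whose exponent is nonnegative for every $p<5$: no decay at all. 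The whole point of Br\'ezis--Gallouet--Wainger is that the $H^2$ norm enters only inside a logarithm, so its polynomial growth costs only $(1+t)^{\ep}$; that feature is destroyed the moment $\|u\|_{H^2}$ carries a positive power.

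What the paper does instead is to place a \emph{decaying} $H^1$-type quantity in the leading slot of the logarithmic embedding. The conformal energy identity, combined with the improved potential energy decay, yields $(1+t)^2\|\nabb\phi\|_{L^2}^2+\|(1+|t-r|)\nabla\phi\|_{L^2}^2+\|\phi\|_{L^2}^2\lesssim(1+t)^{\frac{5-p}{2}}$. One then splits $\Sigma_t$ into $\{|x|\leq t/2\}$, $\{|x|\geq 3t/2\}$ and the annulus $\{t/2\leq|x|\leq 3t/2\}$, rescales, and applies localized versions of the embedding. Away from the light cone the weight $(1+|t-r|)\sim 1+t$ gives $\|\nabla\phi\|_{L^2}^2\lesssim(1+t)^{\frac{1-p}{2}}$ on those regions, hence the rate $(1+t)^{-\frac{p-1}{4}+\ep}$; near the cone only the angular derivative decays, and an annulus version of the inequality of the form $\|u\|_{L^\infty}^2\lesssim\|u\|_{H^1}\|(u,\nabb u)\|_{L^2}\bigl(1+\ln(\cdots)\bigr)$ gives $\|\phi\|_{L^\infty(A_{t,t/2})}^2\lesssim 1\cdot(1+t)^{\frac{1-p}{4}}\cdot\log$, i.e.\ the rate $(1+t)^{-\frac{p-1}{8}+\ep}$. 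So the exponent $\frac{p-1}{8}$ arises as the square root of the angular-energy decay on the light-cone annulus, not from an $L^4$ interpolation; your choice $q=4$ reproduces the number by coincidence, not by a valid mechanism. To repair the argument you would need the region decomposition together with the weighted and angular energy bounds; and for the sharp part you would additionally need the integrated radial bounds on $\sup_{|x|=r}|\phi|$ and the logarithmic estimate for the angular integral of the kernel $((t-s)^2-|x-y|^2)^{-1/2}$, rather than a bare $t^{-1/2}$ dispersive bound, which is not directly applicable to a large-data nonlinearity.
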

\begin{Remark}
The constants $C$ and $C_{\ep}$ depend polynomially on $\mathcal{E}_{0, 2}$ and linearly on $\mathcal{E}_{1, 0}^{\ep}$. One can make such dependence explicitly in the course of the proof.
\end{Remark}

\begin{Remark}
In dimension $d=3$ as shown in \cite{yang:NLW:ptdecay:3D}, the solution decays as quickly as linear waves (along the cone) for $p>p(d)$. Since $\frac{11}{3}<p(2)=1+\sqrt{8}$, the sharp pointwise time decay estimate even holds for $p$ less than $p(2)$ in $\mathbb{R}^{1+2}$. In general, we conjecture that the solution to \eqref{eq:NLW:semi:2d} in $\mathbb{R}^{1+d}$ scatters in critical Sobolev space and decays sharply (only in dimension $d=2, 3$) for $1+\frac{4}{d}<p<1+\frac{4}{d-2}$.
\end{Remark}

\begin{Remark}
Pointwise decay estimates are only available in \cite{Pecher82:NLW:2d} when $p>1+\sqrt{8}$. One novelty  of our work is that the pointwise decay estimate holds for all $p>1$. Moreover, our decay estimates are stronger than those obtained by Glassey-Pecher. Hence we also improved the scattering result in energy space.
\end{Remark}

 As a corollary of the above pointwise decay estimate, we extend Glassey-Pecher's scattering result \cite{Pecher82:NLW:2d} in energy space.
\begin{cor}
 \label{cor:scattering:2D}
 For $p>2\sqrt{5}-1$ and initial data bounded in $\mathcal{E}_{0, 2} $ and $\mathcal{E}_{1, 0}$, the solution $\phi$ of the defocusing semilinear wave equation \eqref{eq:NLW:semi:2d} in $\mathbb{R}^{1+2}$ verifies
 \begin{align*}
   \|\phi\|_{L_t^p L_x^{2p}}<\infty.
 \end{align*}
 Consequently the solution scatters in energy space, that is, there exists pairs $(\phi_0^{\pm}(x), \phi_1^{\pm}(x))$ such that
 \begin{align*}
      \lim\limits_{t\rightarrow\pm\infty}\|\phi(t, x)-\mathbf{L}(t)(\phi_0^{\pm}(x), \phi_1^{\pm}(x))\|_{\dot{H}_x^1}+\| \pa_t\phi(t,x)-\pa_t \mathbf{L}(t)(\phi_0^{\pm}(x), \phi_1^{\pm}(x))\|_{ L_x^2}=0.
\end{align*}
\end{cor}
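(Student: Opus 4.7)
The plan is to deduce the spacetime bound $\|\phi\|_{L_t^p L_x^{2p}}<\infty$ by interpolating the potential energy decay of Theorem~\ref{thm:main:td} against the pointwise estimate of Theorem~\ref{thm:main:pd}, and then convert this into energy scattering via Duhamel's formula and the standard energy inequality.

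First I would observe that $p+1\le 2p\le\infty$ whenever $p\ge 1$, so H\"older interpolation gives
\[
\|\phi(t,\cdot)\|_{L_x^{2p}}\le \|\phi(t,\cdot)\|_{L_x^{p+1}}^{(p+1)/(2p)}\,\|\phi(t,\cdot)\|_{L_x^\infty}^{(p-1)/(2p)}.
\]
Theorem~\ref{thm:main:td} bounds the first factor by $C(1+t)^{-(p-1)/(2(p+1))}$, which raised to the power $(p+1)/(2p)$ contributes a factor $(1+t)^{-(p-1)/(4p)}$. The $L^\infty$ factor is furnished by Theorem~\ref{thm:main:pd}, but the decay rate there depends on the size of $p$ and forces a split.

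If $p>11/3$ the sharp rate $(1+t)^{-1/2}$ combines with the above to give $\|\phi(t)\|_{L_x^{2p}}^p\lesssim (1+t)^{-(p-1)/2}$, integrable in $t$ since $(p-1)/2>4/3$. If instead $2\sqrt{5}-1<p\le 11/3$, the slower rate $(1+t)^{-(p-1)/8+\ep}$ yields, after a short computation,
\[
\|\phi(t)\|_{L_x^{2p}}^p\lesssim (1+t)^{-(p-1)(p+3)/16+p\ep},
\]
and integrability in $t$ requires $(p-1)(p+3)>16$, i.e.\ $p^2+2p-19>0$. The positive root of this quadratic is exactly $2\sqrt{5}-1$, so for $\ep$ sufficiently small we obtain $\|\phi\|_{L_t^p L_x^{2p}}<\infty$. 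Since $2\sqrt{5}-1<11/3$, the two cases together cover all $p>2\sqrt{5}-1$ asserted by the corollary.

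For the scattering conclusion, note the identity $\||\phi|^{p-1}\phi\|_{L_t^1 L_x^2}=\|\phi\|_{L_t^p L_x^{2p}}^p<\infty$. Writing the equation in Duhamel form
\[
(\phi,\pa_t\phi)(t)=\mathbf{L}(t)(\phi_0,\phi_1)\pm\int_0^t \mathbf{L}(t-s)\bigl(0,|\phi|^{p-1}\phi(s)\bigr)\,ds
\]
and applying the energy estimate $\|\mathbf{L}(\tau)(0,g)\|_{\dot H_x^1\times L_x^2}\lesssim\|g\|_{L_x^2}$ to the tail of the integral shows that $\mathbf{L}(-t)(\phi(t),\pa_t\phi(t))$ is Cauchy in $\dot H_x^1\times L_x^2$ as $t\to\pm\infty$; its limits provide the scattering data $(\phi_0^\pm,\phi_1^\pm)$. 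The main delicate point is the optimization that pins down the sharp threshold $2\sqrt{5}-1$: any cruder combination of the two decay rates would produce a strictly larger lower bound on $p$, and the sharpness hinges on the improvement of Glassey--Pecher's $(1+t)^{3-p}$ rate in Theorem~\ref{thm:main:td} together with the pointwise bound of Theorem~\ref{thm:main:pd}. The scattering argument itself is now routine once the spacetime bound is in hand.
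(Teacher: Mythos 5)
Your proposal is correct and follows essentially the same route as the paper: the paper's step $\|\phi(t)\|_{L_x^{2p}}^p=\bigl(\int|\phi|^{p+1}|\phi|^{p-1}dx\bigr)^{1/2}\lesssim\bigl((1+t)^{-\frac{p-1}{2}}\|\phi(t)\|_{L^\infty}^{p-1}\bigr)^{1/2}$ is exactly your H\"older interpolation between $L^{p+1}$ and $L^\infty$, leading to the same exponent $-\frac{(p-1)(p+3)}{16}$ and the same threshold $p^2+2p-19>0$, i.e.\ $p>2\sqrt{5}-1$. The only cosmetic differences are that the paper uses the rough rate $(1+t)^{-\frac{p-1}{8}+\ep}$ of Proposition \ref{prop:ptdecay:allp} uniformly (so no case split at $p=11/3$ is needed) and simply cites the standard Duhamel/energy argument for the scattering conclusion that you spell out.
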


To better elaborate our idea for the proof, let's first review the existing methods to study the asymptotic decay properties for energy subcritical defocusing semilinear wave equations. The pioneering work dates back to Strauss in \cite{Strauss:NLW:decay} for the equation in $\mathbb{R}^{1+3}$ with superconformal power $3<p<5$. The key observation was the almost conservation of conformal energy obtained by using the conformal Killing vector field as multiplier. The superconformal power $1+\frac{4}{d-1}<p$ plays the role that the associated conformal energy is uniformly bounded by the initial data, which leads to the sharp time decay of the potential energy
\begin{align*}
\int_{\mathbb{R}^{d}}|\phi|^{p+1}dx\leq C (1+t)^{-2},\quad \forall d\geq 2
\end{align*}
This time decay estimate is also the starting point to conclude the pointwise decay estimates (in dimension $d=2$ or $3$) and scattering results \cite{vonWahl72:decay:NLW:super}, \cite{Velo87:decay:NLW}. To go beyond the conformal power, Pecher \cite{Pecher82:decay:3d} observed that instead of uniform bound, the conformal energy grows at most polynomially by using Gronwall's inequality. And for $p$ close to the conformal invariant power, the potential energy still decays
\begin{align*}
\int_{\mathbb{}{R}^{d}} |\phi|^{p+1}dx\leq C(1+t)^{1+d-p(d-1)},\quad \forall d \geq 2, \quad p>\frac{d+1}{d-1}.
\end{align*}
This covers part of subconformal power \cite{Pecher82:NLW:2d},
 \cite{Hidano03:scattering:NLW:56D}, \cite{Hidano:scattering:NLW}.

 Improvements both on the range of power $p$ (smaller $p$ makes the problem more difficult) and decay estimates of the solution in higher dimension $d\geq 3$  were achieved by the second author in \cite{yang:scattering:NLW}, \cite{yang:NLW:ptdecay:3D}, \cite{yang:NLW:ptdecay:3D:smallp}, which rely on the new vector field method original developed by Dafermos-Rodnianski \cite{newapp} for the study of decay estimates for linear waves on black hole background. The crucial new ingredient of this approach is a type of $r$-weighted energy estimates obtained by using the vector fields $r^{\ga}(\pa_t+\pa_r)$ as multipliers for all $0\leq \gamma\leq 2$.
This leads to the time decay of the potential energy (comparatively, the original estimate is an integral version in time $t$)
\begin{align*}
\int_{\mathbb{R}^{d}}|\phi(t, x)|^{p+1}dx\leq C (1+t)^{-\frac{(d-1)(p-1)}{2}}.
\end{align*}
One notices that for the subconformal case when $p<\frac{d+3}{d-1}$
\begin{align*}
\frac{(d-1)(p-1)}{2}>1+d-p(d-1).
\end{align*}
In particular the new approach greatly improved the time decay of the potential energy for the subconformal case.

However this new approach only works for the equation in higher dimensions $\mathbb{R}^{1+d}$, $d\geq 3$ and fails when $d=2$ for the reason that the lower order term generated by the use of vector field $r^{\ga}(\pa_t+\pa_r)$ with $1<\gamma <2$ switches sign to be negative.

To overcome this difficulty, we first apply the vector field
\begin{align*}
X= (x_2^2+(t-x_1)^2+1)\partial_t+(x_2^2-(t-x_1)^2)\partial_1+2(t-x_1)x_2\partial_2
\end{align*}
to the region bounded by the null hyperplane $\{t=x_1\}$ and the initial hypersurface, which leads to the weighted energy estimate
\begin{equation*}
  \int_{t\geq 0}x_2^2|(\pa_t+\pa_1)\phi|^2(t,t,x_2) +|\pa_2\phi(t, t, x_2)|^2+\frac{2}{p+1}|\phi(t, t, x_2)|^{p+1} dtdx_2
\leq  C\mathcal{E}_{0, 2}.
\end{equation*}
Then we consider the vector field
 \begin{align*}
X=u_1^{\frac{p-1}{2}}(\partial_t-\partial_1)+u_1^{\frac{p-1}{2}-2}x_2^2(\partial_t+\partial_1)+2u_1^{\frac{p-1}{2}-1}x_2\partial_2\end{align*}
with $u_1=t-x_1+1$ and region bounded by the null hyperplane $\{t=x_1\}$, the constant time $t$ hypersurface $\{t=t\}$ as well as the initial hypersurface. This implies the weighted energy estimate
\begin{align*}
&\int_{x_1\leq t} u_1^{\frac{p-5}{2}}(x_2^2+u_1^2)|\phi(t, x)|^{p+1} dx\\
&\leq C\mathcal{E}_{0, 2}+C\int_{t\geq 0}(|x_2(\pa_t+\pa_1)\phi|^2+|\partial_2\phi|^2+\frac{2}{p+1}|\phi|^{p+1})(t,t,x_2)dtdx_2\\
&\leq C\mathcal{E}_{0, 2}
\end{align*}
in view of the previous weighted energy estimate through the null hyperplane $\{t=x_1\}$. Since we only study the equation in the future $t\geq 0$, the above estimate in particular shows that
\begin{align*}
\int_{x_1\leq 0} (1+t)^{\frac{p-1}{2}} |\phi(t, x)|^{p+1} dx\leq  C\mathcal{E}_{0, 2}
\end{align*}
as $u_1=t-x_1+1$. By symmetry ($x_1$ to $-x_1$) the above estimate also holds on the region $x_1\geq 0$ (the time $t$ is always fixed). This demonstrates the improved time decay of the potential energy at time $t$.

\bigskip
For the pointwise decay estimate for the solution, we go back to the conformal energy identity to derive that
\begin{align*}
 (1+t)^2\|\nabb \phi\|_{L^2}^2+\|(1+|t-r|)\nabla \phi\|_{L^2}^2+\|\phi\|_{L^2}^2 \leq C
 (1+t)^{\frac{5-p}{2}}
 \end{align*}
 in view of the above time decay of the potential energy. Here $\nabb=\nabla-r^{-1}x\pa_r$ is the angular derivative. The rough pointwise decay estimates for the solution then follow by use of the following Br\'{e}zis-Gallouet-Wainger inequality
 \begin{align*}
      \|u\|_{L^\infty(\mathbb{R}^2)}\leq C \| u\|_{H^1(\mathbb{R}^2)}\left(1+\ln \frac{\|u\|_{H^2(\mathbb{R}^2)}}{\| u\|_{H^1(\mathbb{R}^2)}}\right)^{\f12}
    \end{align*}
adapted to regions away from the light cone $\{t=|x|\}$ and annulus near the light cone, see details in Lemma \ref{lem:log:Sobolev}. This is sufficiently strong to conclude the pointwise decay estimate
\[
|\phi(t, x)|\leq C (1+t)^{-\frac{p-1}{8}+\ep}
\]
for all $1<p<5$. The loss of decay rate $(1+t)^{\ep}$ arises due to the at most polynomial growth of the second order energy $\|\phi(t, x)\|_{H^2}$.

Finally for the improved sharp decay estimate for larger $p$, inspired by the idea from \cite{Moncrief1}, we  write the nonlinear solution as sum of linear solution and contribution from nonlinearity.  By carefully estimating the nonlinearity, a type of Gronwall's inequality then leads to sharp pointwise decay of the solution when $p>\frac{11}{3}$.

The plan of the paper is as follows: In Section 2, we define some notations and recall the energy method for wave equations. In Section 3, we use vector field method applied to hyperplane instead of cones to derive time decay estimates for the potential energy. In section 4, we develop logarithmic Sobolev embedding based on Br\'{e}zis-Gallouet-Wainger inequaltiy and conclude the pointwise decay estimates for the solution.

\textbf{Acknowledgments.} The authors would like to thank R. Frank for kindly pointing out the origin of Br\'{e}zis-Gallouet-Wainger inequality. S. Yang is partially supported by NSFC-11701017.

\section{Preliminaries and energy identities}
\label{notation}
Let's first define some necessary notations.
We use the standard Cartesian coordinates $(t, x_1, x_2)$ and polar local coordinate system $(t, r, \theta)$ of Minkowski space.
The initial hypersurface $\{t=0\}$ will be denoted as $\Sigma_0$ and $\Sigma_t$ stands for the constant time $t$ hypersurface.
Since the wave equation is time reversible, without loss of generality we only prove estimates in the future, i.e., $t\geq 0$.

As discussed in the introduction, the main idea to study such large data problem is to start with some weighted energy estimate, which could be obtained by using vector field method in the following way:
Define the associated energy momentum tensor for the scalar field $\phi$
\begin{align*}
  T[\phi]_{\mu\nu}=\pa_{\mu}\phi\pa_{\nu}\phi-\f12 m_{\mu\nu}(\pa^\ga \phi \pa_\ga\phi+\frac{2}{p+1} |\phi|^{p+1}),
\end{align*}
where $m_{\mu\nu}$ is the flat Minkowski metric on $\mathbb{R}^{1+2}$.

For any vector field $X$ and any function $\chi$, define the current
\begin{equation*}
J^{X,  \chi}_\mu[\phi]=T[\phi]_{\mu\nu}X^\nu -
\f12\pa_{\mu}\chi \cdot|\phi|^2 + \f12 \chi\pa_{\mu}|\phi|^2.
\end{equation*}
By using Stokes' formula, we derive the energy identity
\begin{equation}
\label{eq:energy:id}
\int_{\pa\mathcal{D}}i_{  J^{X, \chi}[\phi]} d\vol =\iint_{\mathcal{D}}  T[\phi]^{\mu\nu}\pi^X_{\mu\nu}+
\chi \pa_\mu\phi\pa^\mu\phi -\f12\Box\chi\cdot|\phi|^2 +\chi \phi\Box\phi+X(\phi)(\Box\phi-|\phi|^{p-1}\phi) d\vol
\end{equation}
for any domain $\mathcal{D}$ in $\mathbb{R}^{1+2}$. Here $\pi^X=\f12 \cL_X m$  is the deformation tensor of the metric $m$ along the vector field $X$ and $i_Z d\vol$ is the contraction of the vector field with the volume form $d\vol$. By suitably choosing vector field $X$ and function $\chi$, we derive weighted energy estimate for solution of \eqref{eq:NLW:semi:2d}. The ultimate goal is to show the time decay of the potential energy.

Once we have time decay estimate for the potential energy, we rely on the following representation formula for linear wave equation  with initial data $(\phi_0, \phi_1)$
\begin{equation}
\label{eq:rep:wave:2D}
\begin{split}
  2\pi\phi(t, x)=&t\int_{|y|<1}\frac{\phi_1(x+ty)}{\sqrt{1-|y|^2}}dy+\pa_t\left(t\int_{|y|<1}\frac{\phi_0(x+ty)}{\sqrt{1-|y|^2}}dy\right)\\
  &+\int_0^{t}(t-s)\int_{|y|<1}\frac{\Box\phi(s, x+(t-s)y)}{\sqrt{1-|y|^2}}dy ds.
  \end{split}
\end{equation}
Finally we make a convention through out this paper to avoid too many constants that $A\les B$ means that there exists a constant $C$, depending possibly on $p$, $\mathcal{E}_{0, 2}$, $\mathcal{E}_{1, 0}$ such that $A\leq CB$.

\section{Improved time decay of the potential energy}
For linear wave equations in space dimension greater than two, one can obtain the integrated local energy decay estimates as well as energy flux decay estimates through Dafermos-Rodnianski's modified vector field method \cite{newapp}. However such method fails in space dimension $2$ due to the wrong sign of lower order terms. So far as the authors know, it is even not clear whether the associated version holds for linear waves in $\mathbb{R}^{1+2}$, at least through vector field method.
Since our goal is to investigate the asymptotic decay properties for solutions of defocusing semilinear wave equation, we instead derive new estimates by introducing new vector fields to show the time decay of the potential energy, which substantially improves the existing time decay obtained by using the conformal Killing vector field as multiplier. As the superconformal case $p\geq 5$ is easier and has been well understood in \cite{Pecher82:NLW:2d}, in the sequel we only study the solution with subconformal power
\[
1<p<5. 
\]
We begin in this section with the improved time decay of the potential energy.
\begin{Prop}
  \label{prop:timedecay}
  For solution $\phi$ to the defocusing semilinear wave equation \eqref{eq:NLW:semi:2d} in $\mathbb{R}^{1+2}$ with subconformal power $1<p<5$, the potential energy decays as follows
  \begin{align*}
    \int_{\mathbb{R}^2}|\phi(t, x)|^{p+1}(1+t+|x|)^{\frac{p-1}{2}}dx\leq C\mathcal{E}_{0, 2}
    ,\quad \forall t\geq 0
  \end{align*}
  for some constant $C$ depending only on $p$.

   As a consequence for all $1+\sqrt{8}<p<5$, the solution $\phi$ is bounded in the following spacetime norm
  \begin{align}
  \label{eq:spacetimebd:phi}
    \|\phi\|_{L^{\frac{3(p-1)}{2}}(\mathbb{R}^{1+2})}\leq C \mathcal{E}_{0, 2}^{\frac{1}{p+1}}
  \end{align}
  for some constant $C$ relying only on $p$.
\end{Prop}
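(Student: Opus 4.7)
The plan proceeds in two stages, matching the two claims of the proposition. For the weighted potential-energy decay, I follow the strategy outlined in the introduction: apply the energy identity \eqref{eq:energy:id} twice, on regions bounded by the null hyperplane $\{t=x_1\}$ rather than by a light cone. First, take $X_1=(x_2^2+(t-x_1)^2+1)\partial_t+(x_2^2-(t-x_1)^2)\partial_1+2(t-x_1)x_2\partial_2$ together with an auxiliary $\chi_1$ tuned to cancel the zeroth-order contribution, applied on the wedge between $\{t=0\}$ and $\{t=x_1\}$. A direct computation of $\pi^{X_1}$ shows the bulk terms are non-negative, the initial boundary integral is controlled by $\mathcal{E}_{0,2}$ (thanks to the $t^2$-weighted conformal-energy structure), and the output is the null-hyperplane flux bound already stated in the introduction.

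Second, apply \eqref{eq:energy:id} with
\[
X_2=u_1^{(p-1)/2}(\partial_t-\partial_1)+u_1^{(p-5)/2}x_2^2(\partial_t+\partial_1)+2u_1^{(p-3)/2}x_2\partial_2,\qquad u_1=t-x_1+1,
\]
and a matching $\chi_2$, on $\{0\le t'\le t,\ x_1\le t'\}$. The coefficients of the three components of $X_2$ are tuned so that, once $T^{\mu\nu}\pi^{X_2}_{\mu\nu}$ is combined with the $\chi_2$ pieces, the $\nabla\phi$-quadratic part is non-negative and the residual potential-energy term has a good sign, proportional to $(5-p)\,u_1^{(p-5)/2}(x_2^2+u_1^2)|\phi|^{p+1}$. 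The initial-slice boundary is controlled by $\mathcal{E}_{0,2}$, the null-hyperplane flux by the first step, and the $\Sigma_t$ contribution yields $\int_{x_1\le t}u_1^{(p-5)/2}(x_2^2+u_1^2)|\phi|^{p+1}\,dx\lesssim \mathcal{E}_{0,2}$. Restricting to $\{x_1\le 0\}$ and distinguishing the cases $|x_2|\le u_1$ and $|x_2|>u_1$, an elementary check shows $u_1^{(p-5)/2}(x_2^2+u_1^2)\gtrsim (1+t+|x|)^{(p-1)/2}$ on this half-space; the symmetry $x_1\mapsto -x_1$ of \eqref{eq:NLW:semi:2d} extends the estimate to $\{x_1\ge 0\}$, giving the first statement.

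For the spacetime bound with $q=3(p-1)/2$, Hölder in $x$ against the weight $w=(1+t+|x|)^{(p-1)/2}$, with dual exponents $(p+1)/q$ and $2(p+1)/(5-p)$, gives
\[
\int|\phi(t,x)|^q\,dx\lesssim \Bigl(\int |\phi|^{p+1}w\,dx\Bigr)^{q/(p+1)}\Bigl(\int w^{-q/(p+1-q)}\,dx\Bigr)^{(p+1-q)/(p+1)}.
\]
The first factor is uniformly bounded by the first statement; the second, evaluated in polar coordinates, is finite exactly when $3(p-1)^2>4(5-p)$ and is then bounded by a constant times $(1+t)^{2-3(p-1)^2/(2(5-p))}$. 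Assembling the exponents gives $\|\phi(t)\|_{L^q_x}^q\lesssim (1+t)^{-(3p^2-2p-17)/(4(p+1))}$, which is integrable in $t$ if and only if $p^2-2p-7>0$, i.e.\ $p>1+\sqrt 8$; taking the $q$-th root then produces the claimed $L^q_{t,x}$ bound with constant $\mathcal{E}_{0,2}^{1/(p+1)}$.

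The main obstacle is the algebraic verification in the $X_2$ step: one must expand $T[\phi]^{\mu\nu}\pi^{X_2}_{\mu\nu}+\chi_2\,\partial_\mu\phi\,\partial^\mu\phi-\tfrac12\Box\chi_2\,|\phi|^2$ explicitly, verify non-negativity of the $\nabla\phi$-quadratic part (which forces the particular coefficients of the three components of $X_2$), and choose $\chi_2$ so the residual $|\phi|^2$ contribution either vanishes or is absorbed into a boundary term. The analogous but simpler computation for $X_1$ is the second non-routine step; once these two identities are in place, the restriction/symmetry step and the Hölder interpolation are essentially routine.
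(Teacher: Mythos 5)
Your proposal follows essentially the same route as the paper: the same null-hyperplane flux bound (your $X_1$ with the ``$+1$'' merges the paper's two separate multipliers $X$ and $\partial_t$ into one step), the same second multiplier $X_2$ with $\chi_2=u_1^{(p-3)/2}$ on $\{x_1\le t\}$ yielding $\int_{x_1\le t}u_1^{(p-5)/2}(x_2^2+u_1^2)|\phi|^{p+1}\lesssim\mathcal{E}_{0,2}$, the reflection symmetry, and the identical H\"older computation; your pointwise comparison $u_1^{(p-5)/2}(x_2^2+u_1^2)\gtrsim(1+t+|x|)^{(p-1)/2}$ on $\{x_1\le 0\}$ (split on $|x_2|\lessgtr u_1$) is correct and in fact slightly streamlines the paper's finish, which instead invokes a second symmetry swapping $x_1$ and $x_2$. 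One small imprecision: in the $X_2$ step the bulk potential terms cancel exactly (using $2q+2=p+1$) rather than surviving with a $(5-p)$ coefficient, and the weighted potential energy arises from the $\Sigma_t$ boundary flux as you also correctly state, so the argument is unaffected.
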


\begin{proof}
In the energy identity \eqref{eq:energy:id}, choose the vector field $X$ and the function $\chi$ as follows:
\begin{align*}
X=(r^2-t^2)(\partial_t+\partial_1)+2(t-x_1)S&=(x_2^2+(t-x_1)^2)\partial_t+(x_2^2-(t-x_1)^2)\partial_1+2(t-x_1)x_2\partial_2,\\
& \chi=t-x_1.
\end{align*}
Here $r=|x|$ and $S=t\pa_t+r\pa_r$ is the spacetime scaling vector field.
We then compute that
\[
\pi^X= 2(t-x_1)m, \quad \partial_t\chi=1, \quad (\partial_t+\partial_1)\chi=0,\quad \Box\chi=0.
\]
where $m$ is the Minkowski metric on $\mathbb{R}^{1+2}$. Thus the bulk term on the right hand side of the energy identity \eqref{eq:energy:id} is
\begin{align*}
& T[\phi]^{\mu\nu}\pi^X_{\mu\nu}+
\chi \pa_\mu\phi \pa^\mu\phi  +\chi\phi\Box\phi-\f12 \Box\chi |\phi|^2\\
&=-\frac{6(t-x_1)}{p+1}|\phi|^{p+1} +(t-x_1) |\phi|^{p+1}= -\frac{(5-p)(t-x_1)}{p+1}|\phi|^{p+1}.
\end{align*}
In particular, the bulk integral is nonnegative when $p\leq 5$ and $ t\leq x_1$.

Next we need to compute the left hand side of the energy identity \eqref{eq:energy:id} for the region $\mathcal{D}=\{0\leq t\leq \min(x_1,T)\}$ with boundary $\pa\mathcal{D}$ consisting of the null hyperplane $\{t-x_1=0\}$ and the constant $t$-slices.
 Recall the volume form
\[
d\vol=dtdx=dtdx_1dx_2 .
\]
\def\L{L}
\def\Lu{\underline{L}}
 Hence for the null hypersurface $\{t-x_1=0\}$, we can compute that
\begin{align*}
i_{J^{X, \chi}[\phi]}d\vol&=-(J^{X, \chi}[\phi])_{L_1}dtdx_2= -( T[\phi]_{\L_1 \nu}X^\nu -
\f12 \L_1\chi |\phi|^2 + \f12 \chi\cdot \L_1 |\phi|^2)  dtdx_2\\
&=-T[\phi]_{\L_1 X} dtdx_2=-x_2^2T[\phi]_{\L_1\L_1} dtdx_2=-x_2^2|\L_1\phi|^2dtdx_2.
\end{align*}
Here we denote $ \L_1=\partial_t+\partial_1$ and have used the fact that on the hyperplane $\{t-x_1=0\}$
\[
\L_1\chi=0,\quad \chi=0,\quad X=x_2^2\L_1.
\]
Then on the constant $t$-slice, we can show that
\begin{align*}
  i_{J^{X,\chi}[\phi]}d\vol=&(J^{X, \chi}[\phi])^{0}dx= -( T[\phi]_{0 \nu}X^\nu -
\f12 \pa_t\chi |\phi|^2 + \f12 \chi\cdot \pa_t |\phi|^2)  dx\\
=&-\f12\big( (x_2^2+(t-x_1)^2) (|\partial_t\phi|^2+|\nabla\phi|^2+\frac{2}{p+1}|\phi|^{p+1})- |\phi|^2 +(t-x_1)   \pa_t |\phi|^2  \\
&\quad \qquad+2\partial_t  \phi((x_2^2-(t-x_1)^2)\partial_1\phi+2(t-x_1)x_2\partial_2\phi)\big)  dx\\
=&-\f12( |x_2(\partial_t\phi+\partial_1\phi)+(t-x_1)\partial_2\phi|^2+|(t-x_1)(\partial_t\phi-\partial_1\phi)+x_2\partial_2\phi+\phi|^2\\
&\qquad +\frac{2(x_2^2+(t-x_1)^2)}{p+1}|\phi|^{p+1}  -\partial_1((x_1-t)\phi^2)-\partial_2(x_2\phi^2))  dx.
\end{align*}
Observe that on the constant $t$-slice, we also have
\begin{align*}
  \int_{x_1\geq t} (\partial_1((x_1-t)\phi^2)+\partial_2(x_2\phi^2))(t,x)dx=0.
\end{align*}
This holds whenever $\phi$ decays suitably at spatial infinity. Alternatively, one can choose bounded region and then take limit.
The above computations together with the energy identity \eqref{eq:energy:id} then lead to the following weighted energy identity for $\mathcal{D}=\{0\leq t\leq \min(x_1,T)\}$
\begin{align*}
  &\iint_{0\leq t\leq \min(x_1,T)}  \frac{(5-p)(x_1-t)}{p+1}|\phi|^{p+1}+\int_{0\leq t\leq T}x_2^2|\L_1\phi|^2(t,t,x_2)dtdx_2 \\
  &+\f12\int_{x_1\geq t}  (|x_2 L_1\phi+(t-x_1)\partial_2\phi|^2+|(t-x_1)(\pa_t\phi-\pa_1\phi)+x_2\partial_2\phi+\phi|^2
  \\&\qquad
  +\frac{2(x_2^2+(t-x_1)^2)}{p+1}|\phi|^{p+1} ) dx\Big|_{t=0}^{t=T}=0.
\end{align*}
Letting $T\to+\infty$ we conclude that
\begin{equation}
\label{eq:E1}
\begin{split}
  &\int_{t\geq 0}x_2^2|\L_1\phi|^2(t,t,x_2)dtdx_2\\
  &\leq \f12\int_{x_1\geq 0}  (|x_2(\phi_1+\partial_1\phi_0)-x_1\partial_2\phi_0|^2  +|-x_1(\phi_1-\partial_1\phi_0)+x_2\partial_2\phi_0+\phi_0|^2\\
  &\qquad \qquad
+\frac{2(x_2^2+x_1^2)}{p+1}|\phi_0|^{p+1} ) dx\\
&\leq C  \int_{\{t=0\}}(1+|x|^2)(|\phi_1|^2+|\nabla\phi_0|^2+\frac{2}{p+1}|\phi_0|^{p+1})dx\\
&= C\mathcal{E}_{0, 2}.
\end{split}
\end{equation}
Here and in the following the constant $C$ relies only on $p$ and some small positive constant $\ep$ (in case such constant appears).
In last step the integral of $|\phi_0|^2$ can be controlled by the weighted energy by using Hardy's inequality.

\bigskip

Next in the energy identity \eqref{eq:energy:id}, choose the vector field $X=\partial_t,\ \chi=0,$ then $\pi^X=0,$ and
\begin{align*}
& T[\phi]^{\mu\nu}\pi^X_{\mu\nu}+
\chi \pa_\mu\phi \pa^\mu\phi  +\chi\phi\Box\phi-\f12 \Box\chi |\phi|^2=0.
\end{align*}
For the null hypersurface $\{t-x_1=0\}$, we have
\begin{align*}
i_{J^{X, \chi}[\phi]}d\vol&=-(J^{X, \chi}[\phi])_{L_1}dtdx_2=-T[\phi]_{\L_1 X} dtdx_2\\
&=-(\pa_{t}\phi(\pa_{t}+\pa_{1})\phi+\f12(-|\partial_t\phi|^2+|\nabla\phi|^2+\frac{2}{p+1}|\phi|^{p+1}) )dtdx_2\\
&=-\f12(|\L_1\phi|^2+|\partial_2\phi|^2+\frac{2}{p+1}|\phi|^{p+1}) dtdx_2.
\end{align*}
On the constant $t$-slice, we can show that
\begin{align*}
  i_{J^{X,\chi}[\phi]}d\vol=&(J^{X, \chi}[\phi])^{0}dx= -( T[\phi]_{0 \nu}X^\nu)  dx\\
=&-\f12(|\partial_t\phi|^2+|\nabla\phi|^2+\frac{2}{p+1}|\phi|^{p+1})   dx.
\end{align*}
We therefore derive that
\begin{align*}
  &\int_{0\leq t\leq T}(|\L_1\phi|^2+|\partial_2\phi|^2+\frac{2}{p+1}|\phi|^{p+1})(t,t,x_2)dtdx_2 \\
  &+\int_{x_1\geq t}  (|\partial_t\phi|^2+|\nabla\phi|^2+\frac{2}{p+1}|\phi|^{p+1}) dx\Big|_{t=0}^{t=T}=0.
\end{align*}
Similarly by letting $T\to+\infty$, we conclude that
\begin{align}
\label{eq:E2}
  &\int_{t\geq 0}(|\L_1\phi|^2+|\partial_2\phi|^2+\frac{2}{p+1}|\phi|^{p+1})(t,t,x_2)dtdx_2\\
  \nonumber &\leq \int_{x_1\geq 0}  (|\phi_1|^2+|\nabla\phi_0|^2+\frac{2}{p+1}|\phi_0|^{p+1}) dx\leq \mathcal{E}_{0, 0}.
\end{align}
Finally in the energy identity \eqref{eq:energy:id}, choose the vector field $X$ and the function $\chi$ as follows:
\begin{align*}
X&=u_1^q(\partial_t-\partial_1)+u_1^{q-2}x_2^2(\partial_t+\partial_1)+2u_1^{q-1}x_2\partial_2, \\
 \chi&=u_1^{q-1},\quad u_1=t-x_1+1,\quad q=\f12(p-1).
\end{align*}
 Here we assume $t\geq x_1$. Recall that $ \L_1=(\partial_t+\partial_1),\ \Lu_1=(\partial_t-\partial_1)$. We then compute that
\begin{align*}
\nabla_{\L_1}X= 0,\quad &\nabla_{\Lu_1}X= 2qu_1^{q-1}\Lu_1+2(q-2)u_1^{q-3}x_2^2\L_1+4(q-1)u_1^{q-2}x_2\partial_2,\\
&\nabla_{2}X=2u_1^{q-2}x_2\L_1+2u_1^{q-1}\partial_2.
\end{align*}
In particular the non-vanishing components of the deformation tensor $\pi_{\mu\nu}^X$ are
\begin{align*}
\pi^X_{L_1\Lb_1}&=-2qu_1^{q-1},\quad \pi^X_{\Lb_1\Lb_1}=-4(q-2)u_1^{q-3}x_2^2,\\
 \pi^X_{\Lb_1 \pa_2}&=2(q-2)u_1^{q-2}x_2,\quad \pi^X_{\pa_2\pa_2}=2u_1^{q-1}.
\end{align*}
Since $\Box\chi=0 $, we therefore can compute that
\begin{align*}
& T[\phi]^{\mu\nu}\pi^X_{\mu\nu}+
\chi \pa_\mu\phi \pa^\mu\phi  +\chi\phi\Box\phi-\f12 \Box\chi |\phi|^2\\
&=-qu_+^{q-1} (|\partial_2(\phi)|^2+\frac{2}{p+1}|\phi|^{p+1})+ 2u_1^{q-1}(|\partial_2(\phi)|^2-\frac{1}{2}\pa^\mu \phi \pa_\mu\phi-\frac{1}{p+1}|\phi|^{p+1})\\
&\quad -(q-2)u_1^{q-3}x_2^2 |\L_1\phi|^2-2(q-2)u_1^{q-2}x_2\partial_2\phi\cdot \L_1\phi+
u_1^{q-1} \pa_\mu\phi \pa^\mu\phi  +u_1^{q-1} |\phi|^{p+1} \\
&=(2-q)u_1^{q-3} |x_2\L_1\phi+u_1\partial_2\phi|^2 .
\end{align*}
Here we used $2q+2=p+1.$ In particular when $p\leq 5$, we have $q\leq 2$ and the bulk integral is nonnegative.

Now we compute the boundary integrals for the region $\mathcal{D}=\{\max(x_1,0)\leq t\leq T\}$. On the null hyperplane $\{t-x_1=0\}$, we have
\[
\chi= u_1=1, \quad X=\Lu_1+x_2^2\L_1+2x_2\partial_2.
\]
Therefore we have
\begin{align*}
i_{J^{X, \chi}[\phi]}d\vol&=-(J^{X, \chi}[\phi])_{L_1}dtdx_2= -( T[\phi]_{\L_1 \nu}X^\nu -
\f12 \L_1\chi |\phi|^2 + \f12 \chi\cdot \L_1 |\phi|^2)  dtdx_2\\
&=-(x_2^2T[\phi]_{\L_1\L_1}+T[\phi]_{\L_1\Lu_1}+2x_2T[\phi]_{\L_1 \pa_2}+ \f12 \L_1 |\phi|^2) dtdx_2\\
&=-(x_2^2|\L_1\phi|^2+|\partial_2(\phi)|^2+\frac{2}{p+1}|\phi|^{p+1}+2x_2\partial_2\phi\cdot \L_1\phi+ \f12 \L_1 |\phi|^2) dtdx_2\\
&=-(|x_2\L_1\phi+\partial_2\phi|^2+\frac{2}{p+1}|\phi|^{p+1}+ \f12 \L_1 |\phi|^2) dtdx_2.
\end{align*}
On the constant $t$-slice, we can show that
\begin{align*}
  i_{J^{X,\chi}[\phi]}d\vol=&(J^{X, \chi}[\phi])^{0}dx= -( T[\phi]_{0 \nu}X^\nu -
\f12 \pa_t\chi |\phi|^2 + \f12 \chi\cdot \pa_t |\phi|^2)  dx\\
=&-\f12u_1^{q-2}\big( (x_2^2+u_1^2) (|\partial_t\phi|^2+|\nabla\phi|^2+\frac{2}{p+1}|\phi|^{p+1})- (q-1)|\phi|^2  +u_1   \pa_t |\phi|^2 \\
&\qquad\qquad  +2 (x_2^2-u_1^2)\pa_t\phi\cdot \partial_1\phi+4u_1 x_2 \partial_t\phi \cdot\partial_2\phi \big)  dx\\
=&-\f12 u_1^{q-2}\big( |x_2(\partial_t\phi+\partial_1\phi)+u_1\partial_2\phi|^2+|u_1(\partial_t\phi-\partial_1\phi)+x_2\partial_2\phi+\phi|^2\\
&\qquad \qquad+\frac{2(x_2^2+u_1^2)}{p+1}|\phi|^{p+1} \big)dx- \f12(\partial_1(u_1^{q-1}\phi^2)-\partial_2(x_2u_1^{q-2}\phi^2))  dx.
\end{align*}
On the boundary $\pa\cD$ (for $\cD=\{\max(x_1,0)\leq t\leq T\}$) with suitable decay of the solution $\phi$ at spatial infinity, note that
\begin{align*}
  \int_{x_1\leq t}(\partial_1(u_1^{q-1}\phi^2)-\partial_2(x_2u_1^{q-2}\phi^2))dx\Big|_{t=0}^{t=T}=\int_{0\leq t\leq T} (\L_1 |\phi|^2)(t,t,x_2)dtdx_2.
\end{align*}
Denote
\begin{align*}
  E[\phi]_{q}(t)=\int_{x_1\leq t}u_1^{q-2}( & |x_2(\partial_t\phi+\partial_1\phi)+u_1\partial_2\phi|^2+|u_1(\partial_t\phi-\partial_1\phi)+x_2\partial_2\phi+\phi|^2\\
  &+\frac{2(x_2^2+u_1^2)}{p+1}|\phi|^{p+1})(t,x)dx.
\end{align*}
Then the above computations together with the energy identity \eqref{eq:energy:id} lead to the following weighted energy identity
\begin{align*}
  &\iint_{\max(x_1,0)\leq t\leq T}  (2-q)u_1^{q-3} |x_2\L\phi+u_1\partial_2\phi|^2+\f12(E[\phi]_{q}(T)-E[\phi]_{q}(0))\\
  &-\int_{0\leq t\leq T}(|x_2\L_1\phi+\partial_2\phi|^2+\frac{2}{p+1}|\phi|^{p+1})(t,t,x_2)dtdx_2=0,
\end{align*}
from which we conclude that
\begin{align*}
  &E[\phi]_{q}(T)\leq E[\phi]_{q}(0)+2\int_{0\leq t\leq T}(|x_2\L_1\phi+\partial_2\phi|^2+\frac{2}{p+1}|\phi|^{p+1})(t,t,x_2)dtdx_2.
\end{align*}
Note that on the initial hypersurface $\{t=0\}$,
\begin{align*}
  E[\phi]_{q}(0)&=\int_{x_1\leq 0}(1-x_1)^{q-2}( |x_2(\phi_1+\partial_1\phi_0)+(1-x_1)\partial_2\phi_0|^2+\frac{2(x_2^2+(1-x_1)^2)}{p+1}|\phi_0|^{p+1}\\
  &\qquad \qquad +|(1-x_1)(\phi_1-\partial_1\phi_0)+x_2\partial_2\phi_0+\phi_0|^2
  )dx\\
  &\leq C \mathcal{E}_{0, 2}.
\end{align*}
Now by using estimates \eqref{eq:E1} and \eqref{eq:E2},
we can bound the integral on the null hyperplane
\begin{align*}
  &2\int_{0\leq t\leq T}(|x_2\L_1\phi+\partial_2\phi|^2+\frac{2}{p+1}|\phi|^{p+1})(t,t,x_2)dtdx_2\\
  \leq&4\int_{0\leq t\leq T}(x_2^2|\L_1\phi|^2+|\partial_2\phi|^2+\frac{2}{p+1}|\phi|^{p+1})(t,t,x_2)dtdx_2\\
  \leq &  C\mathcal{E}_{0, 2}.
\end{align*}
We therefore conclude that
\begin{align*}
  \int_{x_1\leq T}\frac{2(u_1^{q}+u_1^{q-2}x_2^2)|\phi|^{p+1}}{p+1}dx\leq E[\phi]_{q}(T)\leq C \mathcal{E}_{0, 2},\quad \forall T\geq 0.
\end{align*}
Recall that $u_1=t-x_1+1$. By restricting the integral to the region $\{x_1\leq 0\}$, the above estimate in particular implies that
\begin{align*}
  \int_{x_1\leq 0}\frac{2(t+1)^q}{p+1}|\phi|^{p+1}(t,x)dx &\leq \int_{x_1\leq t}\frac{2u_1^q}{p+1}|\phi|^{p+1}(t,x)dx \leq C \mathcal{E}_{0, 2},\\
  \int_{x_1\leq t\leq r}\frac{2 (1+2r)^{q-2}x_2^2|\phi|^{p+1}}{p+1}dx&\leq \int_{x_1\leq t}\frac{2 u_1^{q-2}x_2^2|\phi|^{p+1}}{p+1}dx \leq C E[\phi]_{q}(T)\leq C \mathcal{E}_{0, 2}.
\end{align*}
Here $r=|x|=\sqrt{x_1^2+x_2^2}$ and $0<q=\frac{p-1}{2}<2$.
This in particular shows that
\begin{align*}
  \int_{x_1\leq 0} (1+t)^{\frac{p-1}{2}} |\phi|^{p+1}(t,x)dx+\int_{x_1\leq 0, r\geq t} (1+r)^{\frac{p-1}{2}-2} x_2^2 |\phi|^{p+1}(t,x)dx\leq \mathcal{E}_{0, 2}.
\end{align*}
By symmetry (or changing variable $x_1\rightarrow -x_1$), we also have
\begin{align*}
   \int_{x_1\geq 0} (1+t)^{\frac{p-1}{2}} |\phi|^{p+1}(t,x)dx+\int_{x_1\geq 0, r\geq t} (1+r)^{\frac{p-1}{2}-2} x_2^2 |\phi|^{p+1}(t,x)dx\leq C \mathcal{E}_{0, 2}.
\end{align*}
Therefore we derive that
\begin{align*}
   \int_{\mathbb{R}^2} (1+t)^{\frac{p-1}{2}} |\phi|^{p+1}(t,x)dx+\int_{r\geq t} (1+r)^{\frac{p-1}{2}-2} x_2^2 |\phi|^{p+1}(t,x)dx\leq C \mathcal{E}_{0, 2}.
\end{align*}
By symmetry again (switching the variable $x_1$ and $x_2$), we also have
\begin{align*}
   \int_{r\geq t} (1+r)^{\frac{p-1}{2}-2} x_1^2 |\phi|^{p+1}(t,x)dx\leq C \mathcal{E}_{0, 2}.
\end{align*}
Adding the above two estimate leads to the time decay of the potential energy
\begin{align*}
  \int_{\R^2}(1+t+r)^{\frac{p-1}{2}}|\phi|^{p+1}(t,x)dx\leq C \mathcal{E}_{0, 2}.
\end{align*}
By our convention, the constant relies only on $p$. We hence have shown the time decay of the potential energy of the Proposition.

\bigskip

Now based on the time decay estimate of the solution, we demonstrate that the solution is bounded in the spacetime norm $L^{\frac{3(p-1)}{2}}(\mathbb{R}^{1+2})$ for all $1+\sqrt{8}<p<5$. Without loss of generality, it suffices to prove the estimate in the future $t\geq 0$. By using H\"older's inequality, we can show that
\begin{align*}
  \|\phi\|_{L^{\frac{3(p-1)}{2}}(\mathbb{R}^{1+2}\cap \{t\geq 0\})} &\leq \| \|(1+t+r)^{\frac{p-1}{2(p+1)}}\phi\|_{L_x^{p+1}}\|(1+t+r)^{-\frac{p-1}{2(p+1)}}\|_{L_x^{q_1}}\|_{L_t^{\frac{3(p-1)}{2}}(\{t\geq 0\})}\\
  &\leq C \mathcal{E}_{0, 2}^{\frac{1}{p+1}}\| \|(1+t+r)^{-\frac{p-1}{2(p+1)}}\|_{L_x^{q_1}}\|_{L_t^{\frac{3(p-1)}{2}}(\{t\geq 0\})},
\end{align*}
where
\begin{align*}
  \frac{1}{q_1}=\frac{2}{3(p-1)}-\frac{1}{p+1}=\frac{5-p}{3(p-1)(p+1)}.
\end{align*}
Since $p>1+\sqrt{8}$, we in particular have that
\begin{align*}
  2<\frac{3(p-1)^2}{2(5-p)}.
\end{align*}
 We therefore can compute that
\begin{align*}
  \|(1+t+r)^{-\frac{p-1}{2(p+1)}}\|_{L_x^{q_1}}& =\left(\int_{\mathbb{R}^2} (1+t+r)^{-\frac{p-1}{2(p+1)}\frac{3(p-1)(p+1)}{5-p}}dx\right)^{\frac{1}{q_1}}\\
  &\leq C (1+t)^{\frac{17+2p-3p^2}{6(p-1)(p+1)}}.
\end{align*}
This leads to
\begin{align*}
  \|\phi\|_{L^{\frac{3(p-1)}{2}}(\mathbb{R}^{1+2}\cap \{t\geq 0\})} & \leq C \mathcal{E}_{0, 2}[\phi]^{\frac{1}{p+1}} \left(\int_0^{\infty} (1+t)^{\frac{17+2p-3p^2}{6(p-1)(p+1)} \cdot\frac{3(p-1)}{2}} dt\right)^{\frac{2}{3(p-1)}}\\
  &\leq C \mathcal{E}_{0, 2}^{\frac{1}{p+1}}
\end{align*}
in view of the relation
\begin{align*}
17+2p-3p^2<-4(p+1)
\end{align*}
by the assumption $p>1+\sqrt{8}$. We hence
finished the proof for Proposition \ref{prop:timedecay}.
\end{proof}
The time decay estimate of main Theorem \ref{thm:main:td} follows from Proposition \ref{prop:ptdecay:allp} immediately. The scattering results of Theorem \ref{thm:main:td} is a consequence of the spacetime bound \eqref{eq:spacetimebd:phi}. For details regarding this, we refer to Proposition 5.1 of \cite{yang:scattering:NLW}. We thus have shown Theorem \ref{thm:main:td}.

\section{Pointwise decay estimate of the solution}
In this section, we rely on the potential energy decay obtained in the previous section to show the pointwise decay estimate for the solution. As we have discussed in the introduction, the large power $p$ makes the problem easier due to the better decay of the nonlinearity. And it is expected that for sufficiently large $p$, the solution behaves like linear wave as shown in \cite{Pecher82:NLW:2d}, \cite{Velo87:decay:NLW} for the superconformal case $p\geq 5$. For the subconformal case studied in this paper, the solution decays faster for larger $p$. The treatment varies for different range of $p$.
We first demonstrate a rough decay estimate for the solution for all $1<p<5$.

\subsection{Rough decay estimate for all $p$}
The obvious challenge in $\mathbb{R}^2$ is the failure of traditional Sobolev inequality for $L^\infty$ estimate by the energy. This can be compensated by Br\'{e}zis-Gallouet-Wainger inequality.

In $\mathbb{R}^2$, for $0\leq s<R$, denote
\begin{align*}
  B_R=\{x:|x|\leq R\},\quad A_{R, s}=\{x: R-s \leq |x|\leq R+s\}.
\end{align*}
\begin{Lem}
  [Br\'{e}zis-Gallouet-Wainger inequality]
  \label{lem:log:Sobolev}
  In $\mathbb{R}^{2}$, we have the following logarithmic Sobolev inequalities
\begin{align*}
      &\|u\|_{L^\infty(\mathbb{R}^2)}\leq C \| u\|_{H^1(\mathbb{R}^2)}\left(1+\ln \frac{\|u\|_{H^2(\mathbb{R}^2)}}{\| u\|_{H^1(\mathbb{R}^2)}}\right)^{\f12},\\&\|u\|_{L^\infty(B_{1/2})}\leq C \| u\|_{H^1(B_{3/4})}\left(1+\ln \frac{\|u\|_{H^2(B_{3/4})}}{\| u\|_{H^1(B_{3/4})}}\right)^{\f12},\\&\|u\|_{L^\infty(B_1^c)}\leq C \| u\|_{H^1(B_{5/6}^c)}\left(1+\ln \frac{\|u\|_{H^2(B_{5/6}^c)}}{\| u\|_{H^1(B_{5/6}^c)}}\right)^{\f12},\\&\|u\|_{L^{\infty}(A_{1,1/2})}^2\leq C\|u\|_{H^1(\R^2)}\|(u,\nabb u)\|_{L^2(\R^2)}\left(1+\ln\frac{\|u\|_{H^{2}(\R^2)}} {\|(u,\nabb u)\|_{L^2(\R^2)}}\right).
    \end{align*}
\end{Lem}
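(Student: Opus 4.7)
The plan is to establish the four inequalities via the Brézis--Gallouet--Wainger scheme: first the global bound on $\R^2$ by Fourier inversion, then the two localized bounds by smooth cutoff, and finally the anisotropic annulus estimate by adapting the Fourier scheme to angular frequencies on $S^1$.

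For the first inequality I would start from $\|u\|_{L^\infty(\R^2)}\le (2\pi)^{-2}\|\widehat{u}\|_{L^1(\R^2)}$ and split the Fourier integral at a threshold $R>0$. Cauchy--Schwarz with weight $(1+|\xi|^2)^{-1}$ gives
\[
\int_{|\xi|\le R}|\widehat{u}(\xi)|\,d\xi\le \Bigl(\int_{|\xi|\le R}(1+|\xi|^2)^{-1}d\xi\Bigr)^{1/2}\|u\|_{H^1(\R^2)}\lesssim (1+\log R)^{1/2}\|u\|_{H^1(\R^2)},
\]
and with weight $(1+|\xi|^2)^{-2}$,
\[
\int_{|\xi|>R}|\widehat{u}(\xi)|\,d\xi\le \Bigl(\int_{|\xi|>R}(1+|\xi|^2)^{-2}d\xi\Bigr)^{1/2}\|u\|_{H^2(\R^2)}\lesssim R^{-1}\|u\|_{H^2(\R^2)}.
\]
Choosing $R=\|u\|_{H^2}/\|u\|_{H^1}$ to balance the two estimates yields the first inequality.

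For the second and third inequalities I would fix smooth cutoffs $\chi_1\in C_c^\infty(B_{3/4})$ with $\chi_1\equiv 1$ on $B_{1/2}$, and $\chi_2\in C^\infty(\R^2)$ vanishing on $B_{5/6}$ and equal to $1$ outside $B_1$. Applying the first inequality to $\chi_i u$ and using $\|\chi_i u\|_{H^k(\R^2)}\lesssim\|u\|_{H^k(\mathrm{supp}\,\chi_i)}$ (from the Leibniz rule) along with the monotonicity of $1+\log(\cdot)$ immediately gives the corresponding localized bounds.

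For the fourth inequality I would expand $u$ in its angular Fourier series $u(r,\theta)=\sum_{n\in\Z} a_n(r)e^{in\theta}$. Since $r\sim 1$ on $A_{1,1/2}$ we have $|\nabb u|\sim|\partial_\theta u|$ there, and Poincar\'e on $S^1$ gives $\|a_n\|_{L^2(S^1)}\le |n|^{-1}\|\partial_\theta u(r,\cdot)\|_{L^2(S^1)}$ for $n\neq 0$, so that $\|(u,\nabb u)\|_{L^2(\R^2)}$ controls both the radial average and the angular derivative. Following the BGW scheme in the angular variable, I would split $|u(r,\theta)|\le\sum_{|n|\le N}|a_n(r)|+\sum_{|n|>N}|a_n(r)|$, bounding the low sum by Cauchy--Schwarz with the $(1+n^2)^{-1}$ weight (producing a $(1+\log N)^{1/2}\|u(r,\cdot)\|_{H^1(S^1)}$ factor) and the high sum by $N^{-1/2}\|\partial_\theta u(r,\cdot)\|_{L^2(S^1)}$. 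Then I would take the supremum over $r\in[1/2,3/2]$ via the one-dimensional trace bound $\sup_r g(r)\lesssim\int_{1/2}^{3/2}(|g|+|g'|)\,dr$ applied to the circle norms, converting each into a product of $L^2(\R^2)$-type quantities; optimizing $N\sim \|u\|_{H^2}/\|(u,\nabb u)\|_{L^2}$ then produces the stated logarithmic factor with $\|u\|_{H^1}\|(u,\nabb u)\|_{L^2}$ as main factor.

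The main obstacle lies in (4): a direct application of (1) to a cutoff of $u$ on the annulus would only produce $\|u\|_{H^1}^2$ in the main factor, so to replace one copy of $\|u\|_{H^1}$ by the smaller $\|(u,\nabb u)\|_{L^2}$ one must exploit the angular Poincar\'e inequality through the Fourier decomposition and choose the Cauchy--Schwarz weights carefully so that the improvement comes out as a logarithm rather than a polynomial loss in $\|u\|_{H^2}/\|(u,\nabb u)\|_{L^2}$.
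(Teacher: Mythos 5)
Your treatment of the first three inequalities is essentially the paper's: the global bound via Fourier inversion and Cauchy--Schwarz (the paper uses the single weight $(1+|\xi|^2)^{-1/2}(R+|\xi|^2)^{-1/2}$ rather than a sharp frequency cutoff, but the two are interchangeable), and the localized bounds via cutoffs, where the ``monotonicity'' you invoke should be stated precisely as the monotonicity of $\delta\mapsto\delta\bigl(1+\ln(M/\delta)\bigr)^{1/2}$, since the $H^1$ norm being enlarged sits both in the prefactor and in the denominator of the logarithm.

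The fourth inequality is where your proposal diverges from the paper, and as written it has a genuine gap. The logarithm in your scheme is supposed to come from the low angular frequencies via Cauchy--Schwarz with the weight $(1+n^2)^{-1}$, but $\sum_{|n|\le N}(1+n^2)^{-1}$ is bounded uniformly in $N$ -- the $\log R$ in the two-dimensional Brézis--Gallouet--Wainger argument comes from the Jacobian $|\xi|\,d|\xi|$, which has no analogue for a one-dimensional Fourier series. So the claimed factor $(1+\log N)^{1/2}\|u(r,\cdot)\|_{H^1(S^1)}$ does not arise; with that weight you simply get $C\|u(r,\cdot)\|_{H^1(S^1)}$, which after taking $\sup_r$ produces $\|u\|_{H^1}\|(u,\nabb u)\|_{L^2}$-type quantities only if you also pay an $H^2$ price with no cutoff parameter left to optimize. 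To extract a logarithm from a one-dimensional frequency sum you would need the weight $(1+|n|)^{-1}$, hence the quantity $\sum(1+|n|)|a_n(r)|^2$, i.e.\ an $H^{1/2}_\theta$ norm that must then be interpolated between $\|u(r,\cdot)\|_{L^2(S^1)}$ and $\|\partial_\theta u(r,\cdot)\|_{L^2(S^1)}$, and the subsequent $\sup_r$ via the trace inequality has to be applied coefficient-by-coefficient with careful bookkeeping to land exactly on $\|u\|_{H^1}\|(u,\nabb u)\|_{L^2}$ with only a logarithmic loss. None of this is automatic. The paper avoids the issue entirely with a cleaner device: it passes to $(r,\theta)$ as Cartesian coordinates, cuts off to get a compactly supported $u_1$, applies the \emph{anisotropic} rescaling $u_R(r,\theta)=u_1(r/R,\theta R)$ (which preserves $\|\cdot\|_{L^2}$, scales $\partial_r$ by $R^{-1}$ and $\partial_\theta$ by $R$), invokes the already-proven global inequality for $u_R$, and optimizes $R=\|u_1\|_{H^1}^{1/2}\|(u_1,\partial_\theta u_1)\|_{L^2}^{-1/2}$; the asymmetry between the full gradient and the angular derivative is then produced by the scaling rather than by a frequency decomposition. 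I would recommend either adopting that rescaling argument or substantially reworking the angular-Fourier route along the lines above.
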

The first inequality originally appeared in
\cite{Brezis80:LSobolev} with generalizations in \cite{Brezis80:LSobolev:g}. For our needs, we will use the second inequality for the region near the axis $|x|=0$ (for fixed time $t$), while the third inequality will be applied on the region far away from the light cone. Near the light cone, the better decay of the angular derivative plays a crucial role. And we rely on the fourth inequality to prove the decay estimate for the solution. For readers' interests, we will reprove the first inequality, based on which we show the rest ones needed in this paper.

\begin{proof}
We define the Fourier transform as
\begin{align*}
      \hat{u}(\xi)=\frac{1}{2\pi}\int_{\R^2}u(x)e^{-ix\cdot\xi}dx,\quad \xi\in\R^2.
    \end{align*}
In particular we have reverse transform
\begin{align*}
u(x)=\frac{1}{2\pi}\int_{\R^2}\hat{u}(\xi)e^{ix\cdot\xi}d\xi.
\end{align*}
By definition, we recall that
    \begin{align*}
      & \|{u}\|_{H^k(\R^2)}=\|(1+|\xi|^2)^{\frac{k}{2}}\hat{u}\|_{L^2(\R^2)},\quad k=0, 1, 2.
    \end{align*}
    For a fixed constant $R> 1,$ we have\begin{align*}
      \|{u}\|_{L^{\infty}(\R^2)}&\leq \frac{1}{2\pi}\|\hat{u}\|_{L^1(\R^2)}\\
      &\leq \|(1+|\xi|^2)^{\f12}(R+|\xi|^2)^{\f12}\hat{u}\|_{L^2(\R^2)}\|(1+|\xi|^2)^{-\f12}(R+|\xi|^2)^{-\f12}\|_{L^2(\R^2)}.
    \end{align*}
    We now compute that
    \begin{align*}
\|(1+|\xi|^2)^{-\f12}(R+|\xi|^2)^{-\f12}\|_{L^2(\R^2)}^2&=\int_{\R^2}(1+|\xi|^2)^{-1}(R+|\xi|^2)^{-1}d\xi\\
&=\int_0^{+\infty}2\pi(1+r^2)^{-1}(R+r^2)^{-1}rdr\\
&=\frac{\pi}{R-1}\ln\frac{1+r^2}{R+r^2}\Big|_{r=0}^{+\infty}=\frac{\pi\ln R}{R-1}.
\end{align*}
Moreover we estimate that
\begin{align*}
\|(1+|\xi|^2)^{\f12}(R+|\xi|^2)^{\f12}\hat{u}\|_{L^2(\R^2)}^2&=\|(1+|\xi|^2)\hat{u}\|_{L^2(\R^2)}^2+(R-1)\|(1+|\xi|^2)^{\f12}\hat{u}\|_{L^2(\R^2)}^2
\\&=\|{u}\|_{H^2(\R^2)}^2+(R-1)\|{u}\|_{H^1(\R^2)}^2.
\end{align*}
Therefore
\begin{align*}
      &\|{u}\|_{L^{\infty}(\R^2)}^2\leq (\|{u}\|_{H^2(\R^2)}^2+(R-1)\|{u}\|_{H^1(\R^2)}^2)\frac{\pi\ln R}{R-1}.
    \end{align*}
    Now we take $R=1+\|{u}\|_{H^2(\R^2)}/\|{u}\|_{H^1(\R^2)}$.
    Then
    \begin{align*}
      &\|{u}\|_{L^{\infty}(\R^2)}^2\leq 2\pi\|{u}\|_{H^1(\R^2)}^2\ln(1+\|{u}\|_{H^2(\R^2)}/\|{u}\|_{H^1(\R^2)}),
    \end{align*}
    which implies the first inequality on the whole space $\mathbb{R}^2$.

    Now for the second inequality on a finite region and the third inequality on the complement of finite region, fix an even smooth function $ {\psi}:\R\to[0,1]$, which is supported on $[-1,1]$ and equals to one in $[-\frac{5}{6}, \frac{5}{6}]$. Then define smooth functions
    $$ \eta_1(x):={\psi}(|x|),\quad  \eta_2(x):={\psi}(\frac{4}{3}|x|) $$
     such that $ \eta_2$ is supported in $B_{\frac{3}{4}} $ and $1- \eta_1$ is supported in $\overline{B_{\frac{5}{6}}^c} $.  Therefore
     \begin{align}\label{eq:1u}
      &\|\eta_2 u\|_{H^1(\R^2)}\leq C\| u\|_{H^1(B_{3/4})},\ \|\eta_2 u\|_{H^2(\R^2)}\leq C\| u\|_{H^2(B_{3/4})},\\ \label{eq:2u}&\|(1- \eta_1) u\|_{H^1(\R^2)}\leq C\| u\|_{H^1(B_{5/6}^c)},\ \|(1- \eta_1) u\|_{H^2(\R^2)}\leq C\| u\|_{H^2(B_{5/6}^c)}.
    \end{align}
    Here $C>1$ is a universal constant. As $ \eta_2=1$ in $B_{1/2} $, by \eqref{eq:1u} and the above logarithmic Sobolev inequality, we have
    \begin{align*}
      \|u\|_{L^\infty(B_{1/2})} &\leq \|\eta_2u\|_{L^\infty(\R^2)}\\
      & \leq C \|\eta_2 u\|_{H^1(\R^2)}\left(1+\ln \frac{\|\eta_2u\|_{H^2(\R^2)}}{\|\eta_2 u\|_{H^1(\R^2)}}\right)^{\f12}\\
      &\leq C \|\eta_2 u\|_{H^1(\R^2)}\left(1+\ln \frac{\| u\|_{H^2(B_{3/4})}}{\|\eta_2 u\|_{H^1(\R^2)}}\right)^{\f12}\\
      &\leq C  \| u\|_{H^1(B_{3/4})}\left(1+\ln \frac{\| u\|_{H^2(B_{3/4})}}{\| u\|_{H^1(B_{3/4})}}\right)^{\f12},
    \end{align*}
    which implies the second logarithmic Sobolev inequality. Here the last step follows from the fact that $ \delta\cdot(1+\ln(1/\delta))^{\f12} $ is increasing for $ \delta\in(0,1].$  The third Sobolev inequality on the complement of finite region follows in a similar way in view of  $\|u\|_{L^\infty(B_1^c)}\leq \|(1- \eta_1)u\|_{L^\infty(\R^2)} $.

    Finally for the improved Sobolev inequality on annulus, let $$\widetilde{u}(r,\theta)=u(r\cos\theta, r\sin\theta).$$
     Then
    \begin{align*}
      &\|\widetilde{u}\|_{H^k([1/4,3]\times[-\pi/2,\pi/2])}\leq C\|u\|_{H^k(\R^2)},\quad k=0,1,2,\\
      & \|\partial_{\theta}\widetilde{u}\|_{L^2([1/4,3]\times[-\pi/2,\pi/2])}\leq C\|\nabb u\|_{L^2(\R^2)}.
    \end{align*}
    Now we define functions $$ \eta_3(r,\theta):={\psi}(r-5/4){\psi}(2\theta/\pi),\quad u_1(r,\theta)=\eta_3(r,\theta)\widetilde{u}(r,\theta)$$
    on $\mathbb{R}^2$ with Cartesian coordinates $r, \theta\in\mathbb{R}$ (not polar coordinates for $u_1$).
    In particular $ \eta_3$ is smooth and supported in $[1/4,3]\times[-\pi/2,\pi/2] $. Moreover $ \eta_3$ is constant $1$ in $[1/2,3/2]\times[-\pi/3,\pi/3]$. These properties lead to the following relations
    \begin{align}
    \label{eq:uHk}
      &\|{u}_1\|_{H^k(\R^2)}\leq C\|\widetilde{u}\|_{H^k([1/4,3]\times[-\pi/2,\pi/2])}\leq C\|u\|_{H^k(\R^2)},\ k=0,1,2,\\
      & \label{eq:uL2} \|({u}_1,\partial_{\theta}u_1)\|_{L^2(\R^2)}\leq C\|(\widetilde{u},\partial_{\theta}\widetilde{u})\|_{L^2([1/4,3]\times[-\pi/2,\pi/2])}\leq C\|(u,\nabb u)\|_{L^2(\R^2)}.
    \end{align}
    For fixed constant $R\geq 1,$ let $u_R(r,\theta)=u_1(r/R,\theta R)$. We compute that
    \begin{align*}
       \|\partial_r{u}_R\|_{L^2(\R^2)}&=R^{-1}\|\partial_r{u}_1\|_{L^2(\R^2)},\quad \|\partial_{\theta}{u}_R\|_{L^2(\R^2)}=R\|\partial_{\theta}{u}_1\|_{L^2(\R^2)},\\
       R^{-1}\|{u}_1\|_{H^1(\R^2)} &\leq \|{u}_R\|_{H^1(\R^2)}\leq R^{-1}\|{u}_1\|_{H^1(\R^2)}+ R\|({u}_1,\partial_{\theta}u_1)\|_{L^2(\R^2)},\\
       \|{u}_R\|_{H^2(\R^2)}& \leq R^{2}\|{u}_1\|_{H^2(\R^2)}, \quad \|{u}_R\|_{L^2(\R^2)}=\|{u}_1\|_{L^2(\R^2)}.
    \end{align*}
    As a compactly supported function on $\mathbb{R}^2$, by using the logarithmic Sobolev embedding on the whole space, we have
    \begin{align*}
      \|u_1\|_{L^\infty(\R^2)}& =\|u_{R}\|_{L^\infty(\R^2)}\\
      &\leq  C \|u_{R}\|_{H^1(\R^2)}\left(1+\ln \frac{\|u_{R}\|_{H^2(\R^2)}}{\|u_{R}\|_{H^1(\R^2)}}\right)^{\f12}\\
      & \leq C (R^{-1}\|{u}_1\|_{H^1(\R^2)}+ R\|({u}_1,\partial_{\theta}u_1)\|_{L^2(\R^2)})\left(1+\ln \frac{R^2\|u_{1}\|_{H^2(\R^2)}}{R^{-1}\|u_{1}\|_{H^1(\R^2)}}\right)^{\f12},
    \end{align*}
    Now by taking $R=\|{u}_1\|_{H^1(\R^2)}^{\frac{1}{2}} \|({u}_1,\partial_{\theta}u_1)\|_{L^2(\R^2)}^{-\frac{1}{2}}$, we obtain that
    \begin{align*}
      &R^{-1}\|{u}_1\|_{H^1(\R^2)}=R\|({u}_1,\partial_{\theta}u_1)\|_{L^2(\R^2)}=\|{u}_1\|_{H^1(\R^2)}^{\frac{1}{2}}\|({u}_1,\partial_{\theta}u_1)\|_{L^2(\R^2)}^{\frac{1}{2}},\\
      &\ln \frac{R^2\|u_{1}\|_{H^2(\R^2)}}{R^{-1}\|u_{1}\|_{H^1(\R^2)}}\leq \frac{3}{2}\ln \frac{R^2\|u_{1}\|_{H^2(\R^2)}}{\|u_{1}\|_{H^1(\R^2)}}=\frac{3}{2}\ln \frac{\|u_{1}\|_{H^2(\R^2)}}{\|({u}_1,\partial_{\theta}u_1)\|_{L^2(\R^2)}}.
    \end{align*}
    Therefore from the previous inequality and estimates \eqref{eq:uHk}, \eqref{eq:uL2}, we conclude that
    \begin{align*}
      \|u_1\|_{L^\infty(\R^2)}^2 &\leq  C \|{u}_1\|_{H^1(\R^2)}\|({u}_1,\partial_{\theta}u_1)\|_{L^2(\R^2)}\left(1+\ln \frac{\|u_{1}\|_{H^2(\R^2)}}{\|({u}_1,\partial_{\theta}u_1)\|_{L^2(\R^2)}}\right)\\&\leq  C \|{u}\|_{H^1(\R^2)}\|(u,\nabb u)\|_{L^2(\R^2)}\left(1+\ln \frac{\|u\|_{H^2(\R^2)}}{\|(u,\nabb u)\|_{L^2(\R^2)}}\right).
    \end{align*}
    As  $ \eta_3=1$ in $[\frac{1}{2}, \frac{3}{2}]\times[-\frac{\pi}{3}, \frac{\pi}{3}]$, $u_1=\eta_3 \widetilde{u}$, we therefore have that
    \begin{align*}
      \|\widetilde{u}\|_{L^\infty([\frac{1}{2}, \frac{3}{2}]\times[-\frac{\pi}{3}, \frac{\pi}{3}])}^2 &=\|u_1\|_{L^\infty(\R^2)}^2 \\
      &\leq  C \|{u}\|_{H^1(\R^2)}\|(u,\nabb u)\|_{L^2(\R^2)}\left(1+\ln \frac{\|u\|_{H^2(\R^2)}}{\|(u,\nabb u)\|_{L^2(\R^2)}}\right).
    \end{align*}
    By symmetry (or considering $u_{(a)}(r\cos\theta, r\sin\theta)=u(r\cos(\theta+a), r\sin(\theta+a))$), we also have
\begin{align*}
   \|\widetilde{u}\|_{L^\infty([\frac{1}{2}, \frac{3}{2}]\times[a- \frac{\pi}{3}, a+\frac{\pi}{3}])}^2&\leq  C \|{u}\|_{H^1(\R^2)}\|(u,\nabb u)\|_{L^2(\R^2)}\left(1+\ln \frac{\|u\|_{H^2(\R^2)}}{\|(u,\nabb u)\|_{L^2(\R^2)}}\right),\ \forall\ a\in\R.
\end{align*}
In particular we derive that
\begin{align*}
   \|\widetilde{u}\|_{L^\infty([\frac{1}{2}, \frac{3}{2}]\times\R)}^2&\leq  C \|{u}\|_{H^1(\R^2)}\|(u,\nabb u)\|_{L^2(\R^2)}\left(1+\ln \frac{\|u\|_{H^2(\R^2)}}{\|(u,\nabb u)\|_{L^2(\R^2)}}\right),
\end{align*}
which implies the fourth logarithmic Sobolev inequality as $x=(r\cos\theta, r\sin\theta)\in A_{1,1/2}$ is equivalent to $(r, \theta)\in [\frac{1}{2}, \frac{3}{2}]\times [0, 2\pi]$ with $\widetilde{u}(r,\theta)=u(r\cos\theta, r\sin\theta)$.
\end{proof}

To apply the above logarithmic Sobolev inequality, we use the potential energy decay of the previous section to derive the following necessary bounds on fixed time $t$ by using the conformal energy identity.
\begin{Prop}
  \label{prop:PD:allp:Ats}
 The solution $\phi$ to \eqref{eq:NLW:semi:2d} verifies the following estimates for all $\ep>0$
  \begin{align}
 \label{eq:phi:L2}
 (1+t)^2\|\nabb \phi\|_{L^2}^2+\|(1+|t-r|)\nabla \phi\|_{L^2}^2+\|\phi\|_{L^2}^2&\leq C_{\ep}
 (1+t)^{\frac{5-p}{2}},\\
  \label{eq:phi:H2}
  \|\phi\|_{H^2}&\leq C_{\ep } 
  (1+t)^{1+\ep}
 \end{align}
 for some constant $C_{\ep}$ depending on $\ep$, the initial conformal energy $\mathcal{E}_{0, 2}$ and the first order energy $\mathcal{E}_{1, 0}$.
  \end{Prop}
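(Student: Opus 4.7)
The plan is to prove the two estimates separately. The first follows from the conformal energy identity combined with the potential energy decay of Proposition~\ref{prop:timedecay}, while the second is a refined H\"older--Gagliardo--Nirenberg energy estimate applied to the differentiated equation.

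For the first estimate, I would apply the energy identity \eqref{eq:energy:id} with the conformal Killing vector field $K_0 = (t^2+r^2)\partial_t + 2tr\partial_r$ and correction $\chi = t$. A direct computation gives $\pi^{K_0} = 2tm$, $\Box\chi = 0$, and the bulk integrand reduces to $\frac{(p-5)t}{p+1}|\phi|^{p+1}$. After integration by parts on the constant time slice, the boundary term organizes into the manifestly nonnegative conformal energy
\[
E_c(t) = \int_{\R^2}\left(\frac{(t+r)^2|L\phi|^2 + (t-r)^2|\underline{L}\phi|^2}{4} + \frac{(t^2+r^2)|\nabb\phi|^2}{2} + \frac{(t^2+r^2)|\phi|^{p+1}}{p+1} + \frac{|S\phi+\phi|^2}{2}\right)dx,
\]
where $L = \partial_t+\partial_r$, $\underline{L} = \partial_t-\partial_r$ and $S = t\partial_t+r\partial_r$. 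Combining the energy identity with Proposition~\ref{prop:timedecay} and $\int_0^t s(1+s)^{-(p-1)/2}ds \lesssim (1+t)^{(5-p)/2}$ yields $E_c(t) \lesssim (1+t)^{(5-p)/2}$. The bounds on $(1+t)^2\|\nabb\phi\|_{L^2}^2$ and $\|(1+|t-r|)\nabla\phi\|_{L^2}^2$ then follow directly from the weighted terms in $E_c$ together with the uniform $H^1$ bound from energy conservation (which absorbs the $+1$ parts of the weights), while $\|\phi\|_{L^2}^2$ follows from the decomposition $\phi = (S\phi+\phi) - S\phi$ combined with the pointwise bound $|S\phi|^2 \leq \tfrac{1}{2}\bigl((t+r)^2|L\phi|^2+(t-r)^2|\underline{L}\phi|^2\bigr)$.

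For the $H^2$ estimate, differentiate the equation to obtain $\Box\partial_i\phi = p|\phi|^{p-1}\partial_i\phi$ and apply the standard first-order energy estimate, yielding $\frac{d}{dt}\|\partial\phi\|_{\dot H^1}^2 \leq C\||\phi|^{p-1}\partial\phi\|_{L^2}\|\partial\phi\|_{\dot H^1}$. Fixing a small $\delta>0$, H\"older's inequality with exponents $q=2(2+\delta)/\delta$ and $2+\delta$ gives $\||\phi|^{p-1}\partial\phi\|_{L^2} \leq \|\phi\|_{L^{q(p-1)}}^{p-1}\|\partial\phi\|_{L^{2+\delta}}$. The first factor is uniformly bounded via the 2D Gagliardo--Nirenberg identity $\|u\|_{L^Q}^Q \leq C\|\nabla u\|_{L^2}^{Q-(p+1)}\|u\|_{L^{p+1}}^{p+1}$ applied with $Q = q(p-1) \geq p+1$ (which holds for $\delta$ small), using the energy conservation bounds on $\|\nabla\phi\|_{L^2}$ and $\|\phi\|_{L^{p+1}}$. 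The second factor is controlled by the 2D interpolation $\|\partial\phi\|_{L^{2+\delta}} \leq C\|\partial\phi\|_{L^2}^{2/(2+\delta)}\|\nabla\partial\phi\|_{L^2}^{\delta/(2+\delta)} \leq C\|\partial\phi\|_{\dot H^1}^{\delta/(2+\delta)}$ using energy conservation on $\|\partial\phi\|_{L^2}$. Plugging in gives
\[
\tfrac{d}{dt}\|\partial\phi\|_{\dot H^1}^2 \lesssim \|\partial\phi\|_{\dot H^1}^{(2+2\delta)/(2+\delta)},
\]
which integrates to $\|\partial\phi\|_{\dot H^1}(t) \lesssim \mathcal{E}_{1,0}^{1/2} + (1+t)^{1+\delta/2}$. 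Combined with $\|\phi\|_{L^2} \lesssim (1+t)^{(5-p)/4} \leq (1+t)$ from the first estimate, setting $\delta = 2\ep$ yields the claimed bound $\|\phi\|_{H^2} \leq C_\ep (1+t)^{1+\ep}$.

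The main obstacle is setting up the H\"older--Gagliardo--Nirenberg trade-off correctly: one needs $\|\phi\|_{L^Q}$ uniformly bounded for the large exponent $Q = q(p-1)$ (which relies on the specific 2D GN identity connecting $L^Q$, $\dot H^1$ and $L^{p+1}$ rather than standard $H^1$ embeddings, since $\|\phi\|_{L^2}$ itself may grow), and $\|\partial\phi\|_{L^{2+\delta}}$ must be controlled by an arbitrarily small positive power of $\|\partial\phi\|_{\dot H^1}$ so that the resulting ODE $\dot y \lesssim y^\alpha$ has $\alpha<1$ and gives polynomial growth with exponent approaching $1$ as $\delta \to 0$.
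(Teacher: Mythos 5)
Your $H^2$ argument is essentially identical to the paper's (same H\"older splitting $\||\phi|^{p-1}\nabla\phi\|_{L^2}\leq\|\nabla\phi\|_{L^{2+\delta}}\|\phi\|_{L^{2(2+\delta)(p-1)/\delta}}^{p-1}$, same Gagliardo--Nirenberg reductions to conserved quantities, same sublinear ODE), and that half is fine. The gap is in the first estimate, specifically in how you extract $\|\phi\|_{L^2}$ and $\|S\phi\|_{L^2}$. The boundary density produced by the multiplier $K_0$ with $\chi=t$ is not the $E_c$ you wrote: it is
\begin{align*}
\frac{(t+r)^2|L\phi|^2+(t-r)^2|\underline{L}\phi|^2}{4}+\frac{(t^2+r^2)|\nabb\phi|^2}{2}+\frac{(t^2+r^2)|\phi|^{p+1}}{p+1}+t\phi\pa_t\phi-\frac{\phi^2}{2},
\end{align*}
and after integrating over $\R^2$ (using $\int r\phi\pa_r\phi\,dx=-\|\phi\|_{L^2}^2$ in two dimensions) the last two terms equal $\frac12\|S\phi+\phi\|_{L^2}^2-\frac12\|S\phi\|_{L^2}^2$. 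Since the null-form block equals $\frac12|S\phi|^2+\frac12|r\pa_t\phi+t\pa_r\phi|^2$, the $|S\phi|^2$ contributions cancel exactly, and the quantity the identity actually controls is $Q_0(t)=\int(t^2+r^2)|\nabb\phi|^2+|S\phi+\phi|^2+|r\pa_t\phi+t\pa_r\phi|^2\,dx$ plus the weighted potential --- \emph{not} $\int\frac{(t+r)^2|L\phi|^2+(t-r)^2|\underline{L}\phi|^2}{4}dx$ and $\frac12\|S\phi+\phi\|_{L^2}^2$ separately. Your $E_c$ over-counts by $\frac12\|S\phi\|_{L^2}^2$, and consequently your derivation of the $L^2$ bound is circular: you bound $\|\phi\|_{L^2}$ by $\|S\phi\|_{L^2}+\|S\phi+\phi\|_{L^2}$, but $\|S\phi\|_{L^2}$ is only controlled by the conformal energy modulo $\|\phi\|_{L^2}$ itself. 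The same issue infects your $(t-r)^2|\nabla\phi|^2$ bound, which also needs the pure null-form terms or, equivalently, an a priori bound on $\|\phi\|_{L^2}$.

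The missing ingredient is a separate argument for $\|\phi\|_{L^2}$. The paper's route is the identity $\int_{\R^2}(S\phi+\phi)\phi\,dx=\int_{\R^2}t\phi\pa_t\phi\,dx=\frac{t}{2}\frac{d}{dt}\|\phi\|_{L^2}^2$ (the radial part integrates to zero in $\R^2$), which with Cauchy--Schwarz and $\|S\phi+\phi\|_{L^2}\leq Q_0(t)^{1/2}\les(1+t)^{(5-p)/4}$ gives $\frac{d}{dt}\|\phi\|_{L^2}\leq t^{-1}\|S\phi+\phi\|_{L^2}$ and hence $\|\phi(t)\|_{L^2}\les(1+t)^{(5-p)/4}$ after integrating from $t=1$ (with $0\leq t\leq1$ handled by energy conservation and Hardy's inequality on the data). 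Only after this can one write $(t-r)^2|\nabla\phi|^2\les|(t+r)L\phi+\phi|^2+|(t-r)\underline{L}\phi+\phi|^2+(t^2+r^2)|\nabb\phi|^2+|\phi|^2$ and close the first estimate. You should insert this step; the rest of your outline then goes through.
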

\begin{proof}
  The proof relies on the conformal energy identity
  \begin{align*}
  &Q_0(t)+\frac{2}{p+1}\int_{\mathbb{R}^{2}} (t^2+r^2)|\phi(t, x)|^{p+1}dx +\frac{p-5}{p+1}\int_s^t 2\tau \int_{\mathbb{R}^2} |\phi(\tau, x)|^{p+1}dxd\tau \\
  &=Q_0(s)+\frac{2}{p+1}\int_{\mathbb{R}^2} (s^2+|x|^2)|\phi(s, x)|^{p+1}dx,
\end{align*}
which can be obtained by using the conformal Killing vector field as multiplier (see for example Lemma 1 in \cite{Pecher82:NLW:2d}). Here
\begin{align*}
  Q_0(t)= \int_{\mathbb{R}^2} |x_1\pa_2\phi-x_2\pa_1\phi|^2+|S\phi+ \phi|^2+\sum\limits_{j=1}^2|x_j\pa_t\phi+t\pa_j\phi|^2 dx
\end{align*}
with $S=t\pa_t+x_1\pa_1+x_2\pa_2$ the scaling vector field.

  Now by setting $s=0$ and using the time decay estimate of Proposition \ref{prop:timedecay}, we derive that
\begin{align*}
Q_0(t)&\leq C\mathcal{E}_{0, 2} +C\mathcal{E}_{0, 2} \int_0^t s(1+s)^{-\frac{p-1}{2}}ds\\
&\leq C\mathcal{E}_{0, 2}  (1+t)^{\frac{5-p}{2}}.
\end{align*}
Here and in the following $C$ denotes a universal constant.
On the other hand, we can estimate that
\begin{align}\label{eq:Q0}
\int_{\mathbb{R}^2}(t^2+r^2)|\nabb\phi|^2+|t\pa_r\phi+r\pa_t\phi|^2+|S\phi+\phi|^2  dx\leq C Q_0(t)\leq C\mathcal{E}_{0, 2}  (1+t)^{\frac{5-p}{2}} .
\end{align}
To control the $L^2$ norm of $\phi$, we note that
\begin{align*}
 &\int_{\mathbb{R}^2}(t\partial_t\phi+r\partial_r\phi+\phi)\phi dx=\int_{\mathbb{R}^2}t\partial_t\phi\cdot\phi dx=\frac{t}{2}\partial_t\int_{\mathbb{R}^2}|\phi|^2 dx.
\end{align*}
Therefore
 \begin{align*}
& \frac{t}{2}\frac{d}{dt}\|\phi(t)\|_{L^2}^2=\int_{\mathbb{R}^2}(t\partial_t\phi+r\partial_r\phi+\phi)\phi dx\leq \|\phi(t)\|_{L^2}\|(t\partial_t\phi+r\partial_r\phi+r\phi)(t)\|_{L^2}.
\end{align*}
By using the bound
\begin{align*}
\int_{\mathbb{R}^2}|S\phi+\phi|^2dx\leq Q_0(t)\leq C\mathcal{E}_{0, 2} (1+t)^{\frac{5-p}{2}},
\end{align*}
we conclude that
\begin{align}\nonumber
  \|\phi(t)\|_{L^2}\leq \|\phi(1)\|_{L^2}+C\sqrt{\mathcal{E}_{0, 2} } (t+1)^{\frac{5-p}{4}},\quad \forall\ t\geq 1.
\end{align}
We also have
\begin{align}
\nonumber
  \|\phi(s)\|_{L^2}\leq \|\phi(0)\|_{L^2}+\int_0^1\|\partial_t\phi(t)\|_{L^2}dt,\quad \forall\ s\in[0,1]
\end{align}
with
 $$\|\phi(0)\|_{L^2}=\|\phi_0\|_{L^2}\leq C\sqrt{\mathcal{E}_{0, 2} }$$
 by using Hardy's inequality. This together with the standard energy conservation
\begin{align}\label{eq:E0}
\int_{\mathbb{R}^2}|\pa_t\phi|^2+|\nabla\phi|^2+\frac{2}{p+1}|\phi|^{p+1}dx=\mathcal{E}_{0, 0} ,
\end{align}leads to\begin{align}\label{eq:phit}
  \|\phi(t)\|_{L^2}\leq C\sqrt{\mathcal{E}_{0, 2} } (t+1)^{\frac{5-p}{4}},\ \forall\ t\geq 0.
\end{align}
Therefore we can show that
\begin{align*}
&\int_{\mathbb{R}^2}(t-r)^2|\nabla\phi|^2dx \\
&\leq C\int_{\mathbb{R}^2} |(t+r)(\pa_t+\pa_r)\phi+\phi|^2+|(t-r)(\pa_t-\pa_r)\phi+\phi|^2+(t^2+r^2)|\nabb\phi|^2+|\phi|^2dx\\
&\leq C \int_{\mathbb{R}^2}(t^2+r^2)|\nabb\phi|^2+|t\pa_r\phi+r\pa_t\phi|^2+|S\phi+\phi|^2 +|\phi|^2 dx\\
&\leq C\mathcal{E}_{0, 2} (1+t)^{\frac{5-p}{2}}.
\end{align*}
Estimate \eqref{eq:phi:L2} then follows from the above bounds \eqref{eq:Q0}, \eqref{eq:E0}, \eqref{eq:phit}.

\bigskip

Finally for the $H^2$ estimate \eqref{eq:phi:H2} for $\phi$, note that
\begin{align*}
\|\phi\|_{H^1}\leq C\sqrt{\mathcal{E}_{0, 2}} (1+t)^{\frac{5-p}{4}}+C\sqrt{\mathcal{E}_{0, 0}}\leq C\sqrt{\mathcal{E}_{0, 2}} (1+t)
\end{align*}
as $p>1$. It remains to estimate $\|\nabla^2\phi\|_{L^2}$.
For any $0<\ep<\frac{p-1}{p+1}$, by using Gagliardo-Nirenberg inequality, we can show that
\begin{align*}
\||\nabla\phi|\cdot |\phi|^{p-1}\|_{L^2} &\leq \|\nabla\phi\|_{L^{2+\ep}}\|\phi\|_{L^{\frac{2(2+\ep)(p-1)}{\ep}}}^{p-1}\\
&\leq C_{\ep}\|\nabla\phi\|_{L^2}^{\frac{2}{2+\ep}}\|\nabla^2\phi\|_{L^2}^{\frac{\ep}{2+\ep}} \|\phi\|_{L^{p+1}}^{\frac{(p+1)\ep}{2(2+\ep)}}\|\nabla\phi\|_{L^2}^{p-1-\frac{(p+1)\ep}{2(2+\ep)}}\\
&\leq C_{\ep} \mathcal{E}_{0, 0}^{\frac{p}{2}-\frac{(p+1)\ep}{4(2+\ep)}}\|\nabla^2\phi\|_{L^2}^{\frac{\ep}{2+\ep}}.
\end{align*}
Here we have used the energy conservation to control $\|\nabla\phi\|_{L^2}$ and $\|\phi\|_{L^{p+1}}$. Then in view of energy estimate we derive that
\begin{align*}
\|\nabla^2\phi\|_{L^2}&\leq \sqrt{\mathcal{E}_{1, 0} }+p\int_0^{t}\||\nabla\phi|\cdot |\phi|^{p-1}\|_{L^2}ds\\
&\leq \sqrt{\mathcal{E}_{1, 0} }+ C_{\ep} \mathcal{E}_{0, 0}^{\frac{p}{2}-\frac{(p+1)\ep}{4(2+\ep)}}\int_0^t\|\nabla^2\phi\|_{L^2}^{\frac{\ep}{2+\ep}} ds,
\end{align*}
from which we derive that
\begin{align*}
\|\nabla^2\phi\|_{L^2}\leq C\sqrt{\mathcal{E}_{1, 0}}+C_{\ep}\mathcal{E}_{0, 0}^{\frac{p}{2}+\frac{(p-1)\ep}{8}}(1+t)^{1+\frac{\ep}{2}}.
\end{align*}
Replacing $\ep$ by $2\ep$,  we conclude estimate \eqref{eq:phi:H2}.

\end{proof}

As a consequence of the above Proposition and the logarithmic Sobolev inequality, we derive the pointwise decay estimate for the solution for all $1<p<5$.
\begin{Prop}
\label{prop:ptdecay:allp}
For any $\ep>0$, the solution $\phi$ of \eqref{eq:NLW:semi:2d} verifies the following pointwise decay estimate
\begin{align*}
|\phi|\leq C_{\ep}
(1+t+|x|)^{-\frac{p-1}{8}+\ep}
\end{align*}
for some constant $C_{\ep}$ depending on $\ep$, the initial conformal energy $\mathcal{E}_{0, 2}$ and the initial first order energy $\mathcal{E}_{1, 0}$.

The proof also implies that the solution decays better away from the light cone, that is,
\begin{align*}
|\phi|\leq C_{\ep}
(1+t)^{-\frac{p-1}{4}+\ep}, \quad |t-r|\geq \frac{t}{2}.
\end{align*}
\end{Prop}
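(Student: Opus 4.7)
The plan is to convert the $L^2$-based decay of Proposition~\ref{prop:PD:allp:Ats} into pointwise bounds by applying the logarithmic Sobolev inequalities of Lemma~\ref{lem:log:Sobolev} at the natural scale $\lambda\sim 1+t+|x_0|$. I would split $\R^2$ into three regions depending on the target point $x_0$: the interior $|x_0|\le (1+t)/2$, the light-cone annulus $(1+t)/2\le |x_0|\le 3(1+t)/2$, and the far exterior $|x_0|\ge 3(1+t)/2$. In each region one rescales $\phi$ so that the natural geometric scale becomes order one and the appropriate inequality from Lemma~\ref{lem:log:Sobolev} applies directly. The case of bounded $t$ and $|x_0|$ is routine via Sobolev embedding and the polynomial growth of $\|\phi\|_{H^2}$, so I focus on the regime $\lambda\gg 1$.

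For the interior set $\psi(y)=\phi(t,\lambda y)$ with $\lambda=1+t$, so that $y_0=x_0/\lambda\in B_{1/2}$; for the exterior, recenter at $x_0$ with scale $\lambda=|x_0|/10$. In both cases the relevant unit-size ball lies in $\{1+|t-r|\gtrsim \lambda\}$, so Proposition~\ref{prop:PD:allp:Ats} yields $\|\phi\|_{L^2}^2\lesssim \lambda^{(5-p)/2}$ and $\|\nabla\phi\|_{L^2(\text{ball})}^2\lesssim \lambda^{-2}\lambda^{(5-p)/2}=\lambda^{(1-p)/2}$, while the $H^2$ bound rescales to $\|\psi\|_{H^2}\lesssim \lambda^{2+\epsilon}$. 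Hence $\|\psi\|_{H^1(B_{3/4})}^2\lesssim \lambda^{(1-p)/2}$, and the second inequality of Lemma~\ref{lem:log:Sobolev} (translated to be centered at $x_0$ in the exterior case) yields
\[
|\phi(t,x_0)|=|\psi(y_0)|\lesssim \lambda^{(1-p)/4}(1+\ln\lambda)^{1/2}\lesssim \lambda^{(1-p)/4+\epsilon},
\]
which amounts to $(1+t)^{-(p-1)/4+\epsilon}$ in the interior and $(1+t+|x_0|)^{-(p-1)/4+\epsilon}$ in the exterior. This proves the second, stronger estimate of the proposition on $\{|t-r|\ge t/2\}$.

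The main obstacle is the light-cone annulus, where the weighted estimate on $\nabla\phi$ degenerates and only the angular derivative retains decay. With $\lambda=1+t$ and $\psi(y)=\phi(t,\lambda y)$ so that $y_0\in A_{1,1/2}$, energy conservation gives $\|\nabla\psi\|_{L^2}\lesssim 1$, while Proposition~\ref{prop:PD:allp:Ats} combined with the dilation invariance $\|\nabb\psi\|_{L^2}=\|\nabb\phi\|_{L^2}$ yields $\|(\psi,\nabb\psi)\|_{L^2}^2\lesssim \lambda^{(1-p)/2}$ and $\|\psi\|_{H^2}\lesssim \lambda^{2+\epsilon}$. The fourth inequality of Lemma~\ref{lem:log:Sobolev} then produces
\[
|\phi(t,x_0)|^2\lesssim \|\psi\|_{H^1}\,\|(\psi,\nabb\psi)\|_{L^2}\,\bigl(1+\ln\tfrac{\|\psi\|_{H^2}}{\|(\psi,\nabb\psi)\|_{L^2}}\bigr)\lesssim \lambda^{(1-p)/4+\epsilon},
\]
so that $|\phi(t,x_0)|\lesssim (1+t+|x_0|)^{-(p-1)/8+\epsilon}$ since $|x_0|\sim 1+t$ on the annulus. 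The geometric-mean structure of the annular Sobolev inequality is precisely what halves the exponent from $(p-1)/4$ to $(p-1)/8$, explaining the bifurcation in the proposition's statement; taking the worst of the three regional bounds delivers the global pointwise estimate.
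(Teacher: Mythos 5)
Your proposal is correct and follows essentially the same route as the paper: the same three-region decomposition (interior, light-cone annulus, far exterior), the same rescaling to unit scale so that Proposition \ref{prop:PD:allp:Ats} feeds the logarithmic Sobolev inequalities of Lemma \ref{lem:log:Sobolev}, and the same use of the geometric-mean annular inequality to explain the halved exponent $\frac{p-1}{8}$ near the cone. The only cosmetic deviation is in the exterior region, where you translate the ball inequality to a ball centered at $x_0$ of radius $\sim|x_0|$ while the paper rescales about the origin and invokes the third (exterior-domain) inequality of Lemma \ref{lem:log:Sobolev}; both give the same bound.
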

\begin{proof}
The estimate trivially holds when $t+|x|\leq 10$. In the sequel, let's assume without loss of generality that  $t+|x|>10$. Divide $\mathbb{R}^2$ into three regions: $B_{\frac{t}{2}}$, $B_{\frac{3t}{2}}^c$, $\{\frac{t}{2}\leq |x|\leq \frac{3t}{2}\}$. Here recall that $B_{R}$ denotes the spatial ball with radius $R$.

On region $B_{\frac{t}{2}}$, for fixed $t>1$, define
\begin{align}\label{deftphi}
\tilde{\phi}(x)=\phi(t, tx),\ \text{for}\ x\in\R^2.
\end{align}
In view of the decay estimate \eqref{eq:phi:L2}, we conclude that
\begin{align*}
\int_{|x|\leq 3t/4}|\nabla\phi|^2dx\leq 4^2t^{-2}\int_{|x|\leq t} (t-r)^2 |\nabla\phi|^2 dx \les 
(1+t)^{\frac{1-p}{2}}.
\end{align*}
Thus we can compute that
\begin{align*}
\int_{|x|\leq 3/4}|\tilde{\phi}|^2+|\nabla\tilde{\phi}|^2dx \leq C\int_{|x|\leq 3t/4}( t^{-2}|\phi|^2 +|\nabla\phi|^2) dx\les
(1+t)^{\frac{1-p}{2}}.
\end{align*}
Similarly for the second order derivative
\begin{align*}
\int_{|x|\leq 3/4}|\nabla^2 \tilde{\phi}|^2 dx=\int_{|x|\leq 3t/4} t^2 |\nabla^2\phi|^2 dx \les  t^2+ (1+t)^{4+\ep}\les (1+t)^{4+\ep}.
\end{align*}
Hence from the logarithmic Sobolev embedding of Lemma \ref{lem:log:Sobolev}, we derive that on the region $\{|x|\leq \frac{t}{2}\}$, the solution satisfies
\begin{align*}
|\phi|\leq |\tilde{\phi}|\les (1+t)^{\frac{1-p}{4}+\frac{\ep}{2}}\sqrt{\ln (C_{\ep}t)}\les (1+t)^{\frac{1-p}{4}+\ep}.
\end{align*}
Here by using our notation the implicit constant $C_{\ep}$ relies on $\ep$, the initial data $\mathcal{E}_{0, 2}$ and $\mathcal{E}_{1, 0}$.

On the region $B_{\frac{3t}{2}}$, fix a point $(t, x_0)$ with $r_0=|x_0|\geq \frac{3t}{2}$. Define
\begin{align*}
\psi(x)=\phi(t, r_0x),\quad |x|\geq 1.
\end{align*}
Similarly we can show that
\begin{align*}
\int_{|x|\geq 5/6} |\psi|^2+|\nabla\psi|^2 &\leq 6^2 r_0^{-2}\int_{\mathbb{R}^2} |\phi|^2 +|(r-t)\nabla\phi|^2 dx\\
&\les 
r_0^{-2} (1+t)^{\frac{5-p}{2}}\\
&\les
 (1+r_0)^{\frac{1-p}{2}}.
\end{align*}
Here as we have assumed that $t+|x_0|\geq 10$ and $|x_0|\geq \frac{3}{2}t$.
For the second order derivative of $\psi$, we have
\begin{align*}
\int_{|x|\geq 5/6}|\nabla^2 \psi|^2 dx=r_0^2 \int_{|x|\geq 5r_0/6} |\nabla^2 \phi|^2 dx\les  
r_0^2+
(1+r_0)^{4+\ep}\les (1+r_0)^{4+\ep}.
\end{align*}
Therefore according the logarithmic Sobolev embedding, we derive that
\begin{align*}
|\phi(t, x_0)|\les 
(1+r_0)^{\frac{1-p}{4}+\ep},\quad r_0=|x_0|\geq \frac{3}{2}t.
\end{align*}
Finally on the region near the light cone, consider the annulus $A_{t, t/2}$. For fixed $t>2$, we still define $\widetilde{\phi} $ as in \eqref{deftphi}.
 Therefore in view of the estimates in Proposition \ref{prop:PD:allp:Ats}, we can show that
\begin{align*}
\|\widetilde{\phi}\|_{L^2(\R^2)}=t^{-1}\|{\phi}(t)\|_{L^2(\R^2)} &\les 
(1+t)^{\frac{1-p}{4}},\\
 \|\nabla\widetilde{\phi}\|_{L^2(\R^2)}=\|\nabla{\phi}(t)\|_{L^2(\R^2)} &\les 1, \\ 
\|\nabb\widetilde{\phi}\|_{L^2(\R^2)}=\|\nabb{\phi}(t)\|_{L^2(\R^2)} &\les 
(1+t)^{\frac{1-p}{4}},\\
 \|\widetilde{\phi}\|_{H^2(\R^2)}\leq t\|{\phi}(t)\|_{H^2(\R^2)} &\les 
 t+
 (1+t)^{2+\frac{\ep}{2}}\les (1+t)^{2+\frac{\ep}{2}}.
\end{align*}
Therefore by using the logarithmic Sobolev embedding adapted to the annulus $A_{1, 1/2}$ for $\widetilde{\phi}$, we derive that
\begin{align*}
 \|\phi(t)\|_{L^{\infty}(A_{t,t/2})}^2
&=\|\widetilde{\phi}\|_{L^{\infty}(A_{1,1/2})}^2\\
& \leq C \|\widetilde{\phi}\|_{H^1(\R^2)}\|(\widetilde{\phi},\nabb \widetilde{\phi})\|_{L^2(\R^2)}\left(1+\ln\frac{\|\widetilde{\phi}\|_{H^{2}(\R^2)}} {\|(\widetilde{\phi},\nabb \widetilde{\phi})\|_{L^2(\R^2)}}\right) \\
&\les
(1+t)^{\frac{1-p}{4}}\left(1+\ln t \right)\\
&\leq C_{\ep} 
(1+t)^{\frac{1-p}{4}+2\ep }.
\end{align*}
By our notation the implicit constant as well as $C_{\ep}$ depends on $\ep>0$, the initial data $\mathcal{E}_{0, 2}$ and $\mathcal{E}_{1, 0}$.
We thus finished the proof for Proposition \ref{prop:ptdecay:allp}.
\end{proof}
As a corollary of the above pointwise decay estimate, the solution scatters in energy space when $p>2\sqrt{5}-1$.
\begin{Cor}
When $p>2\sqrt{5}-1$, the solution $\phi$ to \eqref{eq:NLW:semi:2d} verifies the following spacetime bound
\begin{align*}
\|\phi\|_{L_t^p L_x^{2p}}\leq C 
\end{align*}
for some constant $C$ depending on $p$, the initial data $\mathcal{E}_{0, 2}$ and $\mathcal{E}_{1, 0}$.
Consequently the solution scatters in energy space, that is, there exists pairs $(\phi_0^{\pm}(x), \phi_1^{\pm}(x))$ such that
 \begin{align*}
      \lim\limits_{t\rightarrow\pm\infty} \| \pa \phi(t,x)-\pa \mathbf{L}(t)(\phi_0^{\pm}(x), \phi_1^{\pm}(x))\|_{ L_x^2}=0.
\end{align*}
Here $\mathbf{L}(t)$ is the linear evolution operator for wave equation.
\end{Cor}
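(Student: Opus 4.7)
The plan is to first establish the spacetime bound $\|\phi\|_{L_t^p L_x^{2p}} < \infty$ by interpolating the pointwise decay of Proposition \ref{prop:ptdecay:allp} against the potential energy decay of Proposition \ref{prop:timedecay}, and then to conclude scattering in the energy space via a standard Duhamel/energy estimate.

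For the spacetime bound, I would first observe that since $p+1 < 2p$ for $p>1$, log-convexity of $L^q$ norms gives
\begin{align*}
\|\phi(t)\|_{L_x^{2p}}^{2p} \leq \|\phi(t)\|_{L_x^\infty}^{p-1}\, \|\phi(t)\|_{L_x^{p+1}}^{p+1}.
\end{align*}
Proposition \ref{prop:timedecay} provides $\|\phi(t)\|_{L_x^{p+1}}^{p+1} \lesssim (1+t)^{-(p-1)/2}$, while Proposition \ref{prop:ptdecay:allp} yields $\|\phi(t)\|_{L_x^\infty}^{p-1} \lesssim (1+t)^{-(p-1)^2/8 + \ep}$ for any $\ep>0$. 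Combining and taking a square root,
\begin{align*}
\|\phi(t)\|_{L_x^{2p}}^{p} \lesssim (1+t)^{-(p-1)^2/16 - (p-1)/4 + \ep}.
\end{align*}
Integrability of this in $t$ requires $(p-1)^2/16 + (p-1)/4 > 1$, equivalently $q^2 + 4q - 16 > 0$ with $q := p-1$, whose positive root is $q = 2\sqrt{5}-2$. This gives precisely the threshold $p > 2\sqrt{5}-1$, and the strict inequality leaves room to absorb the arbitrarily small $\ep$-loss.

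For the scattering statement, I would use Duhamel's formula
\begin{align*}
\begin{pmatrix} \phi(t) \\ \pa_t \phi(t) \end{pmatrix} = \mathbf{L}(t)(\phi_0,\phi_1) - \int_0^t \mathbf{L}(t-s)\begin{pmatrix} 0 \\ |\phi|^{p-1}\phi(s) \end{pmatrix} ds,
\end{align*}
and define the candidate forward asymptotic data
\begin{align*}
(\phi_0^+,\phi_1^+) := (\phi_0,\phi_1) - \int_0^\infty \mathbf{L}(-s)\bigl(0,\,|\phi|^{p-1}\phi(s)\bigr)\, ds.
\end{align*}
Since $\mathbf{L}(-s)$ is an isometry on $\dot{H}_x^1\times L_x^2$, Minkowski's inequality reduces both the convergence of this integral and the desired limit $\|(\phi,\pa_t\phi)(t)-\mathbf{L}(t)(\phi_0^+,\phi_1^+)\|_{\dot{H}_x^1\times L_x^2}\to 0$ to the finiteness of
\begin{align*}
\int_0^\infty \bigl\||\phi(s)|^{p-1}\phi(s)\bigr\|_{L_x^2}\, ds = \int_0^\infty \|\phi(s)\|_{L_x^{2p}}^{p}\, ds = \|\phi\|_{L_t^p L_x^{2p}}^p,
\end{align*}
which is exactly the bound already established; the backward-in-time data $(\phi_0^-,\phi_1^-)$ arise from the same argument applied on $t\leq 0$.

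The only real obstacle is the exponent bookkeeping and confirming that the $\ep$-loss inherited from Proposition \ref{prop:ptdecay:allp} is affordable. This is safe because the threshold $p = 2\sqrt{5}-1$ corresponds to exact $L^1$-borderline integrability in time, so any strict improvement $p > 2\sqrt{5}-1$ yields a power-decay margin that dominates a sufficiently small fixed $\ep>0$ at large $t$.
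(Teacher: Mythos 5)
Your proposal is correct and follows essentially the same route as the paper: the spacetime bound comes from exactly the same splitting $\int|\phi|^{2p}=\int|\phi|^{p-1}|\phi|^{p+1}\leq\|\phi\|_{L^\infty}^{p-1}\int|\phi|^{p+1}$, combined with Propositions \ref{prop:timedecay} and \ref{prop:ptdecay:allp}, and the threshold $(p-1)(p+3)>16$ you derive is the paper's condition in disguise. The only difference is that you write out the standard Duhamel/Cook argument for scattering, which the paper simply cites to \cite{Pecher82:NLW:2d}; your version of that step is also correct.
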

\begin{proof}
Without loss of generality we only consider the spacetime norm in the future $t\geq 0$.
In view of the potential energy decay estimate of Proposition \ref{prop:timedecay} as well as the pointwise decay estimate of the above proposition, we can show that
\begin{align*}
\|\phi\|_{L_t^{p} L_x^{2p}}^p &=\int_{0}^{\infty}\left(\int_{\mathbb{R}^{2}}|\phi|^{p+1}|\phi|^{p-1}dx\right)^{\frac{1}{2}}dt\\
&\les \int_{0}^{\infty}\left(
(1+t)^{-\frac{p-1}{2}} ( 
(1+t)^{-\frac{p-1}{8}+\ep })^{p-1}\right)^{\frac{1}{2}}dt\\
&\les
  \int_{0}^{\infty}(1+t)^{-\frac{(p-1)(p+3)}{16}+\frac{p-1}{2}\ep }dt.
\end{align*}
Since $p>2\sqrt{5}-1$, we in particular have that
\begin{align*}
(p-1)(p+3)> 16.
\end{align*}
Choose $\ep$ sufficiently small such that
\begin{align*}
-\frac{(p-1)(p+3)}{16}+\frac{p-1}{2}\ep <-1.
\end{align*}
We then conclude that
\begin{align*}
\|\phi\|_{L_t^pL_x^{2p}}\leq C_{\ep}
\end{align*}
for some constant $C_{\ep}$ depending only on $\ep>0$ (which could be fixed), $p$ and the initial data $\mathcal{E}_{0, 2}$, $\mathcal{E}_{1, 0}$. The scattering result follows by a standard argument, see for example \cite{Pecher82:NLW:2d}.
\end{proof}

\subsection{Improved sharp decay estimates for larger $p$}
Since the solution decays in time, the nonlinearity decays faster for larger $p$. It is believed that for sufficiently large $p$, the solution to the nonlinear wave equation \eqref{eq:NLW:semi:2d} decays as fast as linear waves (for example the superconformal case $p\geq 5$ addressed in \cite{Pecher82:NLW:2d}). We conjecture that for equation \eqref{eq:NLW:semi:2d} on $\mathbb{R}^{1+2}$, this holds when $p>3$ (that is for sufficiently smooth and localized data the solution decays like $t^{-\frac{1}{2}}$,  which is stronger than that obtained in the previous section).
In this section, we show that this conjecture holds when $p>\frac{11}{3}$.

For fixed $t\geq 0,\ r>0$ define\begin{align}\label{defphi*}
&\phi_*(t, r)=\sup\{|\phi(t, x)|:|x|=r\}.
\end{align}
\begin{Lem}
\label{lem:decay}
For the solution $\phi$ of \eqref{eq:NLW:semi:2d}, let $\phi_*$ be defined as above in \eqref{defphi*}.
Then for $1<p<5$ and  $t\geq 0$
\begin{align*}
&\int_0^{+\infty}(1+t+r)^{\frac{p-1}{2}}|\phi_*(t, r)|^{\frac{p+3}{2}}dr\leq C
\end{align*}
for some constant $C$ depending only on $\mathcal{E}_{0, 2}$ and $\mathcal{E}_{1, 0}$.
\end{Lem}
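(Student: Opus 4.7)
My strategy is to use a Sobolev-type inequality on the circle $S^1_r$ to bound $\phi_*(t,r)^{(p+3)/2}$ by integrals over the circle, then integrate in $r$ against the weight $(1+t+r)^{(p-1)/2}$ and apply Cauchy--Schwarz combined with the potential energy decay from Proposition \ref{prop:timedecay} and the conformal-energy estimate from Proposition \ref{prop:PD:allp:Ats}.

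For fixed $t$ and $r>0$, set $g(\theta)=\phi(t,r\cos\theta,r\sin\theta)$. Applying the fundamental theorem of calculus on the circle and bounding the minimum of $|g|^{(p+3)/2}$ by its mean yields
\begin{equation*}
\phi_*(t,r)^{(p+3)/2}\leq\frac{1}{2\pi}\int_0^{2\pi}|\phi|^{(p+3)/2}d\theta+\frac{p+3}{2}\,r\int_0^{2\pi}|\phi|^{(p+1)/2}|\nabb\phi|d\theta,
\end{equation*}
using $|\partial_\theta\phi|=r|\nabb\phi|$. Multiplying by $(1+t+r)^{(p-1)/2}$, integrating in $r$, and converting to the volume measure via $dx=r\,dr\,d\theta$ splits the left-hand side into a mean term $J_1$ and an oscillation term
\begin{equation*}
J_2=\int_{\R^2}(1+t+r)^{(p-1)/2}|\phi|^{(p+1)/2}|\nabb\phi|\,dx.
\end{equation*}

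The oscillation term is controlled by Cauchy--Schwarz as $J_2^2\leq I_1\cdot I_2$, where $I_1=\int(1+t+r)^{(p-1)/2}|\phi|^{p+1}dx\lesssim 1$ is exactly the potential energy bound of Proposition \ref{prop:timedecay}, and $I_2=\int(1+t+r)^{(p-1)/2}|\nabb\phi|^2dx$. Since $p<5$, writing $(1+t+r)^{(p-1)/2}=(1+t+r)^{(p-5)/2}(1+t+r)^2\leq(1+t)^{(p-5)/2}(1+t+r)^2$ and invoking the conformal-energy bound $\int(1+t+r)^2|\nabb\phi|^2dx\lesssim(1+t)^{(5-p)/2}$ from Proposition \ref{prop:PD:allp:Ats} gives $I_2\lesssim(1+t)^{(p-5)/2}(1+t)^{(5-p)/2}=1$, so $J_2\lesssim 1$.

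The remaining mean term $J_1=\int_0^{\infty}(1+t+r)^{(p-1)/2}(2\pi)^{-1}\int_0^{2\pi}|\phi|^{(p+3)/2}d\theta\,dr$ is the delicate piece, since converting $d\theta\,dr$ to $dx/r$ introduces a $1/r$ singularity at the origin. I would split at $r=1$: on $r\geq 1$ the factor $1/r$ is harmless, and Cauchy--Schwarz via $|\phi|^{(p+3)/2}=|\phi|^{(p+1)/2}|\phi|$ reduces matters to controlling $\int(1+t+r)^{(p-1)/2}|\phi|^2dx$, which is bounded by combining the $L^2$ estimate $\|\phi\|_{L^2}^2\lesssim(1+t)^{(5-p)/2}$ from Proposition \ref{prop:PD:allp:Ats} with a further splitting of the weight; on $r\leq 1$ the rough pointwise decay of Proposition \ref{prop:ptdecay:allp} together with the $H^2$ control provides a bounded contribution. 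The main obstacle is precisely this treatment of $J_1$ near the origin, where the singular $1/r$ factor from the polar Jacobian is not absorbed by the potential energy bound alone, and closing the estimate uniformly in $t$ requires carefully combining several weighted energy and pointwise estimates.
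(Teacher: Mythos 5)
Your splitting of $\phi_*^{(p+3)/2}$ into an angular mean plus an oscillation term is the right first move, and your treatment of the oscillation term $J_2$ by Cauchy--Schwarz against the potential energy decay of Proposition \ref{prop:timedecay} and the conformal bound on $\nabb\phi$ is correct --- it is essentially the paper's estimate of its term $|A_2A_3|^{1/2}$. The gap is in the mean term $J_1$, which you correctly identify as the obstacle but do not actually close. On the intermediate region $1\lesssim r\lesssim t$ your Cauchy--Schwarz reduction leads to $\int(1+t+r)^{\frac{p-1}{2}}r^{-2}|\phi|^2dx$, and since $\frac{p-1}{2}+\frac{5-p}{2}=2$, the only available bound $\|\phi\|_{L^2}^2\lesssim(1+t)^{\frac{5-p}{2}}$ yields at best $(1+t)^{2}$; there is no two-dimensional Hardy inequality to absorb the $r^{-2}$. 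Falling back on the interior pointwise decay $|\phi|\lesssim(1+t)^{-\frac{p-1}{4}+\ep}$ gives a contribution of order $(1+t)^{1-\frac{(p-1)^2}{8}+\ep'}$, which is bounded only when $p\geq 1+\sqrt{8}$ --- not for all $1<p<5$ as the lemma asserts, and not even on the range $\frac{11}{3}<p<1+\sqrt{8}$ where the lemma is subsequently applied.

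The missing idea is to average $\phi$ itself rather than $|\phi|^{(p+3)/2}$. Setting $A_1(t,r)=\frac{1}{2\pi}\int_{-\pi}^{\pi}\widetilde\phi\,d\theta$ and $A_3(t,r)=\int_{-\pi}^{\pi}|\widetilde\phi|^{p+1}d\theta$, one has $\inf_\theta|\widetilde\phi|\leq|A_1|$ and $2\pi|A_1|^{p+1}\leq A_3$ by Jensen, and --- crucially --- $A_1$ is a radial function whose radial derivative is controlled through $\|(1+|t-r|)\partial_r\phi\|_{L^2}^2\lesssim(1+t)^{\frac{5-p}{2}}$ from Proposition \ref{prop:PD:allp:Ats}. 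The paper then integrates by parts in one dimension,
\begin{align*}
\int_0^{t+1}|A_1|^{\frac{p+3}{2}}(t+1-2r)\,dr=-\int_0^{t+1}r(t+1-r)\,\partial_r\bigl(|A_1|^{\frac{p+3}{2}}\bigr)\,dr\lesssim(1+t)^{\frac{3-p}{2}},
\end{align*}
using $|\partial_r(|A_1|^{\frac{p+3}{2}})|\lesssim A_3^{1/2}|\partial_rA_1|$ and Cauchy--Schwarz against the two bounds just mentioned; this yields $\int_0^{(t+1)/4}|A_1|^{\frac{p+3}{2}}dr\lesssim(1+t)^{\frac{1-p}{2}}$, which is exactly what is needed to absorb the weight $(1+t+r)^{\frac{p-1}{2}}$ near the origin and in the interior (the region $r\gtrsim t$ being handled directly by $|A_1|^{\frac{p+3}{2}}\leq rA_3+r^{-1}|A_1|^2$). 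Without this device --- or some equivalent exploitation of the weighted $L^2$ bound on the radial derivative --- the mean term cannot be closed for all $1<p<5$.
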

\begin{proof}
For fixed $t\geq 0,\ r>0$ define
\begin{align*}
\widetilde{\phi}(t, r,\theta)&=\phi(t, r\cos\theta, r\sin\theta),\\
 {\phi}_-(t, r)&=\inf\{|\phi(t, x)|:|x|=r\}=\inf\{|\widetilde{\phi}(t, r,\theta)|:\theta\in\R\},\\
 A_1(t, r)&=\frac{1}{2\pi}\int_{-\pi}^{\pi}\widetilde{\phi}(t, r,\theta)d\theta,\\
  A_2(t, r)&=\int_{-\pi}^{\pi}|\partial_{\theta}\widetilde{\phi}(t, r,\theta)|^2d\theta,\\
   A_3(t, r)&=\int_{-\pi}^{\pi}|\widetilde{\phi}(t, r,\theta)|^{p+1}d\theta.
\end{align*}
Then we have  $0\leq{\phi}_-(t, r)\leq |A_1(t, r)|$ (if $ \phi$ is real valued). By the definition of $\phi_*,\ {\phi}_-,\ \widetilde{\phi},\ A_1,\ A_2,\\ A_3$ and H\"older inequality, we have
\begin{align*}
\phi_*^{\frac{p+3}{2}}(t, r)-\phi_-^{\frac{p+3}{2}}(t, r)&=\sup_{\theta\in[-\pi,\pi)}|\widetilde{\phi}(t, r,\theta)|^{\frac{p+3}{2}}-\inf_{\theta\in[-\pi,\pi)}|\widetilde{\phi}(t, r,\theta)|^{\frac{p+3}{2}}\\
& \leq \int_{-\pi}^{\pi}|\partial_{\theta}(|\widetilde{\phi}|^{\frac{p+3}{2}})(t, r,\theta)|d\theta\\
&=\frac{p+3}{2}\int_{-\pi}^{\pi}|\partial_{\theta}\widetilde{\phi}||\widetilde{\phi}|^{\frac{p+1}{2}}(t, r,\theta)d\theta\\
&\leq \frac{p+3}{2}|A_2(t, r)A_3(t, r)|^{\f12}.
\end{align*}
In particular we obtain that
\begin{align}
\label{eq:phi-}
&\phi_*^{\frac{p+3}{2}}(t, r)\leq \phi_-^{\frac{p+3}{2}}(t, r)+C|A_2(t, r)A_3(t, r)|^{\f12}\leq |A_1(t, r)|^{\frac{p+3}{2}}+C|A_2(t, r)A_3(t, r)|^{\f12}.
\end{align}
Now using polar coordinate $ x=r(\cos\theta, \sin\theta)$ and the definition of $A_1,A_2,A_3$,
we write that
\begin{align*}
    \int_{\mathbb{R}^2}|\phi(t, x)|^{p+1}(1+t+r)^{\frac{p-1}{2}}dx&=\int_0^{+\infty}A_3(t, r)(1+t+r)^{\frac{p-1}{2}}rdr,
\\
\int_{\mathbb{R}^2}(1+t^2+r^2)|\nabb\phi|^2 dx&=\int_0^{+\infty}\int_{-\pi}^{\pi}(1+t^2+r^2)|r^{-1}\partial_{\theta}\widetilde{\phi}(t, r,\theta)|^2rd\theta dr\\
&=\int_0^{+\infty}A_2(t, r)(1+t^2+r^2)r^{-1}dr,\\
\int_{\mathbb{R}^2}|\phi|^2 dx &=\int_0^{+\infty}\int_{-\pi}^{\pi}|\widetilde{\phi}(t, r,\theta)|^2rd\theta dr
\geq2\pi\int_0^{+\infty}|A_1(t, r)|^2rdr,\\
\|(1+|t-r|)\nabla \phi\|_{L^2}^2 &\geq \|(1+|t-r|)\partial_r \phi\|_{L^2}^2\\
&=\int_0^{+\infty}\int_{-\pi}^{\pi}(1+|t-r|)^2|\partial_r\widetilde{\phi}(t, r,\theta)|^2rd\theta\\
& \geq2\pi\int_0^{+\infty}(1+|t-r|)^2|\partial_rA_1(t, r)|^2rdr.
\end{align*}
In view of the time decay of Proposition \ref{prop:timedecay}, we conclude that
\begin{align}\label{eq:A3}
    &\int_0^{+\infty}A_3(t, r)(1+t+r)^{\frac{p-1}{2}}rdr\leq C\mathcal{E}_{0, 2}.
\end{align}
By using estimates \eqref{eq:Q0} and \eqref{eq:E0}, we derive that
\begin{align}
    \nonumber
    \int_{\mathbb{R}^2}(1+t^2+r^2)|\nabb\phi|^2 dx &\leq  C\mathcal{E}_{0, 2}  (1+t)^{\frac{5-p}{2}},\\
     \nonumber
     \int_0^{+\infty}A_2(t, r)(1+t^2+r^2)r^{-1}dr &\leq C\mathcal{E}_{0, 2}  (1+t)^{\frac{5-p}{2}},\\
      \label{eq:A2}
      \int_0^{+\infty}A_2(t, r)(1+t+r)^{\frac{p-1}{2}}r^{-1}dr &\leq C(1+t)^{\frac{p-1}{2}-2}\int_0^{+\infty}A_2(t, r)(1+t^2+r^2)r^{-1}dr\\
      \nonumber
      &\leq C\mathcal{E}_{0, 2}.
\end{align}
Similarly estimate \eqref{eq:phi:L2} of Proposition \ref{prop:PD:allp:Ats} leads to
\begin{align}
\label{eq:A11}
    \int_0^{+\infty}(|A_1(t, r)|^2+(1+|t-r|)^2|\partial_rA_1(t, r)|^2)rdr &\les 
    (1+t)^{\frac{5-p}{2}},\\
    \label{eq:A12}
    \int_{\frac{t+1}{4}}^{+\infty}r^{\frac{p-1}{2}-1}|A_1(t, r)|^2dr &\leq C(1+t)^{\frac{p-1}{2}-2}\int_0^{+\infty}|A_1(t, r)|^2rdr\\
    \nonumber
    & \les 1.
\end{align}
Therefore from the above estimates \eqref{eq:A2} and  \eqref{eq:A3},
 we show that
 \begin{align}
 \label{eq:A2A3}
    \int_0^{+\infty}|A_2(t, r)A_3(t, r)|^{\f12}(1+t+r)^{\frac{p-1}{2}}dr&\leq\int_0^{+\infty}(r^{-1}A_2(t, r)+rA_3(t, r))(1+t+r)^{\frac{p-1}{2}}dr\\
    \nonumber&\leq 1. 
\end{align}
By using H\"older's inequality and the definition of $A_1$ and $A_3$, we have
\begin{align*}
 2\pi|A_1(t, r)|^{p+1}\leq A_3(t, r).
\end{align*}
We thus can show that
\begin{align}
\label{eq:A13}
    &|A_1(t, r)|^{\frac{p+3}{2}}=|A_1(t, r)|^{\frac{p+1}{2}}|A_1(t, r)|\leq |A_3(t, r)|^{\frac{1}{2}}|A_1(t, r)|\leq r|A_3(t, r)|+r^{-1}|A_1(t, r)|^2,\\ \label{eq:A14}&|\partial_r(|A_1(t, r)|^{\frac{p+3}{2}})|=\frac{p+3}{2}|A_1(t, r)|^{\frac{p+1}{2}}|\partial_rA_1(t, r)|\leq C|A_3(t, r)|^{\frac{1}{2}}|\partial_rA_1(t, r)|.
\end{align}
From estimates \eqref{eq:A13}, \eqref{eq:A3} and \eqref{eq:A12}, we conclude that
\begin{align}
\label{eq:A1r>t}
    &\int_{\frac{t+1}{4}}^{+\infty}r^{\frac{p-1}{2}}|A_1(t, r)|^{\frac{p+3}{2}}dr\leq\int_{\frac{t+1}{4}}^{+\infty}r^{\frac{p-1}{2}}(r|A_3(t, r)|+r^{-1}|A_1(t, r)|^2)dr\les 1.
\end{align}
In view of bounds \eqref{eq:A14}, \eqref{eq:A3} and \eqref{eq:A11}, we show that
\begin{align*}
    &\int_{0}^{t+1}|A_1(t, r)|^{\frac{p+3}{2}}(t+1-2r)dr\\
    &=-\int_{0}^{t+1}r(t+1-r)\partial_r(|A_1(t, r)|^{\frac{p+3}{2}})dr\\
    &\leq  C\int_{0}^{t+1}r(t+1-r)|A_3(t, r)|^{\frac{1}{2}}|\partial_rA_1(t, r)|dr\\
    &\leq C\int_{0}^{t+1}(t+1)|A_3(t, r)|rdr+C(t+1)^{-1}\int_{0}^{t+1}(t+1-r)^2|\partial_rA_1(t, r)|rdr\\
    & \leq C(t+1)^{1-\frac{p-1}{2}}\int_0^{+\infty}A_3(t, r)(1+t+r)^{\frac{p-1}{2}}rdr+C(t+1)^{-1}\mathcal{E}_{0, 2}(1+t)^{\frac{5-p}{2}}\\
    & \les
    (1+t)^{\frac{3-p}{2}},
\end{align*}
which together with \eqref{eq:A1r>t} implies that
\begin{align*}
    &\int_{0}^{\frac{t+1}{2}}|A_1(t, r)|^{\frac{p+3}{2}}(t+1-2r)dr\\
    &=\int_{0}^{t+1}|A_1(t, r)|^{\frac{p+3}{2}}(t+1-2r)dr+\int_{\frac{t+1}{2}}^{t+1}|A_1(t, r)|^{\frac{p+3}{2}}(2r-t-1)dr\\
    & \les (1+t)^{\frac{3-p}{2}}+ (1+t)^{\frac{3-p}{2}}\int_{\frac{t+1}{2}}^{t+1}|A_1(t, r)|^{\frac{p+3}{2}}r^{\frac{p-1}{2}}dr\\
    &\les
    (1+t)^{\frac{3-p}{2}}.
\end{align*}
Moreover for small $r$, we bound that
\begin{align}
\label{eq:A1r<t}
    &\int_0^{\frac{t+1}{4}}|A_1(t, r)|^{\frac{p+3}{2}}dr\leq \frac{2}{t+1}\int_{0}^{\frac{t+1}{2}}|A_1(t, r)|^{\frac{p+3}{2}}(t+1-2r)dr\les (1+t)^{\frac{1-p}{2}}.
\end{align}
Combining estimates \eqref{eq:A1r>t} and \eqref{eq:A1r<t}, we deduce that
\begin{align*}
    &\int_0^{+\infty}|A_1(t, r)|^{\frac{p+3}{2}}(1+t+r)^{\frac{p-1}{2}}dr\les 1,
\end{align*}
which together with estimates \eqref{eq:phi-} and \eqref{eq:A2A3}
implies the Lemma. Here recall that by our notation the implicit constant relies only on the initial data $\mathcal{E}_{0, 2}$ and $\mathcal{E}_{1, 0}$.
\end{proof}

We also need the following integration lemma.
\begin{Lem}
\label{lem:log2}
For real constant $ A\neq 1$, let
\begin{align*}
&F(A)=\int_{\theta\in [-\pi,\pi),A+\cos\theta>0}\frac{1}{\sqrt{A+\cos\theta}}d\theta.
\end{align*}
Then we can bound that
\[
 F(A)\leq C(1+\ln(1+1/|A-1|))
 \]
 for some constant $C$ independent of $A$.
\end{Lem}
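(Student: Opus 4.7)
The plan is to reduce $F(A)$ to a one-dimensional real integral with explicit singular structure, and then extract the logarithm by an elementary antiderivative argument near the relevant endpoints.

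First, if $A\le -1$ then $A+\cos\theta\le 0$ on the whole circle and $F(A)=0$, so the bound is vacuous. Assume henceforth $A>-1$. Using evenness in $\theta$ together with the half-angle identity $\cos\theta=1-2\sin^2(\theta/2)$, the substitution $t=\sin(\theta/2)$ converts the integral to
\[
F(A) \;=\; 4\int_{0}^{\min(1,\,t_0)} \frac{dt}{\sqrt{\bigl((A+1)-2t^2\bigr)\,(1-t^2)}},\qquad t_0:=\sqrt{(A+1)/2}.
\]
The only candidate singular endpoints of the integrand are $t=1$ (from $1-t^2$) and $t=t_0$ (from $(A+1)-2t^2$), and these coincide precisely at $A=1$. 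This near-coincidence is what produces the logarithm.

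In the easy regime $|A-1|\ge 1/2$ the two potential singular points are separated by a positive distance, so the surviving endpoint singularity is of standard integrable $1/\sqrt{\cdot}$ type, giving $F(A)\le C$, which is absorbed into $C(1+\ln(1+1/|A-1|))$. For $|A-1|<1/2$ I split into two subcases. If $A\ge 1$, then $\min(1,t_0)=1$; writing $u=1-t$, a Taylor expansion yields $(A+1-2t^2)(1-t^2)\asymp 2u\bigl((A-1)+4u\bigr)$ near $u=0$. The contribution from $u\in(0,1/2)$ is then controlled by the explicit antiderivative
\[
\int \frac{du}{\sqrt{2u\bigl((A-1)+4u\bigr)}} \;=\; \frac{1}{\sqrt{2}}\,\sinh^{-1}\!\Bigl(2\sqrt{u/(A-1)}\Bigr),
\]
which at $u=1/2$ is $\lesssim 1+\ln\bigl(1+1/(A-1)\bigr)$; the remaining region $t\in[0,1/2]$ has a denominator bounded below and contributes $O(1)$. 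If $A<1$, instead $\min(1,t_0)=t_0$ and setting $v=t_0-t$ gives, near $v=0$,
\[
(A+1-2t^2)(1-t^2) \;\asymp\; 4t_0\,v\,\Bigl(\tfrac{1-A}{2}+2t_0 v\Bigr),
\]
to which the same $\sinh^{-1}$-type antiderivative yields $\lesssim 1+\ln\bigl(1+1/(1-A)\bigr)$. Combining all regimes proves the bound with a constant $C$ independent of $A$.

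There is no deep analytic obstacle: the only care required is bookkeeping of which endpoint hosts the singularity in each regime, and retaining \emph{both} nearly-vanishing factors in the Taylor expansion of the denominator. Dropping either factor too early produces a spurious $1/\sqrt{|A-1|}$ rather than the correct $\ln(1/|A-1|)$, so the argument must keep the full product $v(\alpha+\beta v)$ in the denominator until the $\sinh^{-1}$ evaluation is performed.
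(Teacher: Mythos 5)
Your argument is correct and is essentially the paper's own: both reduce $F(A)$ by an elementary trigonometric substitution to an explicit one-dimensional integral whose two square-root singularities coalesce as $A\to 1$, and both extract the logarithm from an explicit antiderivative after splitting into the cases $A\le -1$, $A$ away from $1$, $A$ near $1$ from below, and $A>1$. The only cosmetic difference is that the paper pushes the substitution one step further to $4\int_0^{\pi/2}\bigl((1+A)\sin^2 s+(1-A)\bigr)^{-1/2}ds$ and bounds the denominator below by a perfect square, integrating $1/(\text{linear})$, whereas you keep the product of the two nearly-vanishing factors and invoke the $\sinh^{-1}$ antiderivative; both yield the same bound.
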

\begin{proof}
If $A\leq -1$ then $F(A)=0$. For the case when  $-1<A<1$,
we first can write that
\begin{align*}
 F(A)&=2 \int_{\theta\in [0,\pi),A+\cos\theta>0}\frac{1}{\sqrt{A+\cos\theta}}d\theta \\
 &=2\int_{-A}^1\frac{1}{\sqrt{A+z}\sqrt{1-z^2}}dz\\
 &=4\int_0^{\pi/2}\frac{1}{\sqrt{(1+A)\sin^2s+(1-A)}}ds.
\end{align*}
Now for the case when $-1<A\leq 0$, it trivially has that
\[
 (1+A)\sin^2s+(1-A)\geq (1-A)\geq 1.
 \]
 In particular
 \begin{align*}
 F(A)=&4\int_0^{\pi/2}\frac{1}{\sqrt{(1+A)\sin^2s+(1-A)}}ds\leq 2\pi.
\end{align*}
For the case when $0\leq A<1$,
we estimate that
\begin{align*}
(1+A)\sin^2s+(1-A)\geq \sin^2s+(1-A)\geq \frac{1}{2} (\frac{2s}{\pi}+\sqrt{1-A})^2,\quad \forall 0\leq s\leq\frac{\pi}{2}.
\end{align*}
Therefore we can bound that
\begin{align*}
 F(A)&\leq 4\sqrt{2}\int_0^{\pi/2}\frac{1}{\frac{2s}{\pi}+\sqrt{1-A}}ds=2\sqrt{2}\pi\ln\frac{1+\sqrt{1-A}}{\sqrt{1-A}}\leq 2\sqrt{2}\pi (1+\ln(1+1/|A-1|)).
\end{align*}
Similarly if $A>1$, we change variable as follows
\begin{align*}
 F(A)&=2 \int_{0}^{\pi}\frac{1}{\sqrt{A+\cos\theta}}d\theta \\
 &=4\int_0^{\pi/2}\frac{1}{\sqrt{2\sin^2s+(A-1)}}ds\\
 &\leq  4\pi \sqrt{2}\int_0^{\pi/2}\frac{1}{2\sqrt{2}s +\pi \sqrt{A-1}}ds\\
 &=2\pi\ln\frac{\sqrt{2}+\sqrt{1-A}}{\sqrt{1-A}}\\
 &\leq 2\sqrt{2} \pi(1+\ln(1+1/|A-1|)).
\end{align*}
This completes the proof.
\end{proof}

We will also use the following lemma.
\begin{Lem}
\label{lem:log1}
Let  $g$ be nonnegative function defined on $r>0$. Then for any $s>0$ and $x\in\R$, it holds
\begin{align*}
\int_{r>0}g(r)(1+\ln(1+2r |x-r|^{-1}))dr
&\leq C\int_{r>0}g(r)(1+\ln(1+s+r))dr+C\|g\|_{L^{\infty} }(1+s)^{-1}
\end{align*}
for some constant $C$ which is independent of $s$ and $x$.
\end{Lem}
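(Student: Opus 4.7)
The plan is to reduce the logarithm $\ln(1+2r|x-r|^{-1})$ to a sum of two simpler pieces, one depending only on $r$ (which can be absorbed into the $\ln(1+s+r)$ term on the right), and a singular piece $\ln(1+|x-r|^{-1})$ which is handled by splitting the domain of integration at an appropriate scale depending on $s$.

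First I would use the pointwise inequality
\begin{equation*}
1 + 2r|x-r|^{-1} \;\le\; (1+2r)\bigl(1+|x-r|^{-1}\bigr),
\end{equation*}
which follows by expanding the right-hand side, to conclude
\begin{equation*}
\ln\bigl(1+2r|x-r|^{-1}\bigr) \;\le\; \ln(1+2r) + \ln\bigl(1+|x-r|^{-1}\bigr).
\end{equation*}
The first term integrated against $g$ obeys $\int_{r>0} g(r)\ln(1+2r)\,dr \le C\int_{r>0} g(r)(1+\ln(1+s+r))\,dr$, which is already part of the right-hand side. The remaining task is therefore to show that
\begin{equation*}
I(x,s) \;:=\; \int_{r>0} g(r)\,\ln\bigl(1+|x-r|^{-1}\bigr)\,dr \;\lesssim\; \int_{r>0} g(r)(1+\ln(1+s+r))\,dr + \|g\|_{L^\infty}(1+s)^{-1}.
\end{equation*}

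The key step is to split the integration at the scale $\delta := (1+s)^{-2}$. On the region $|x-r|\ge\delta$ one simply uses the monotonicity of the logarithm:
\begin{equation*}
\ln\bigl(1+|x-r|^{-1}\bigr) \;\le\; \ln(1+\delta^{-1}) \;=\; \ln\bigl(1+(1+s)^2\bigr) \;\le\; C\bigl(1+\ln(1+s)\bigr),
\end{equation*}
so this part contributes at most $C(1+\ln(1+s))\int g(r)\,dr$; since $\ln(1+s)\le \ln(1+s+r)$ for $r\ge 0$, this is bounded by $C\int g(r)(1+\ln(1+s+r))\,dr$. On the remaining region $|x-r|<\delta$, I would bound $g$ pointwise by $\|g\|_{L^\infty}$ and compute the remaining integral directly. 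A short computation using the antiderivative $(1+u)\ln(1+u)-u\ln u$ gives
\begin{equation*}
\int_{|x-r|<\delta,\,r>0}\!\ln\bigl(1+|x-r|^{-1}\bigr)\,dr \;\le\; 2\int_0^{\delta}\ln\bigl(1+u^{-1}\bigr)\,du \;\le\; C\,\delta\bigl(1+|\ln\delta|\bigr).
\end{equation*}
With the choice $\delta=(1+s)^{-2}$ we have $\delta|\ln\delta| \le 2(1+s)^{-2}\ln(1+s) \le C(1+s)^{-1}$, so this contributes at most $C\|g\|_{L^\infty}(1+s)^{-1}$, as required.

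The only delicate point is the balancing of the threshold $\delta$: taking $\delta=(1+s)^{-1}$ produces an unwanted $\ln(1+s)$ factor in the $\|g\|_{L^\infty}$ term, while $\delta=(1+s)^{-2}$ is just small enough to absorb the logarithm arising from $-\delta\ln\delta$ into a $(1+s)^{-1}$ decay, while still keeping $\ln(1+\delta^{-1})$ comparable to $\ln(1+s)$. Any polynomial decay $\delta = (1+s)^{-\alpha}$ with $\alpha>1$ would in fact work, and this is really the only nontrivial ingredient; everything else is direct.
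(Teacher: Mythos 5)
Your proof is correct. The overall strategy---isolate the singularity at $r=x$, treat the region within distance $\delta$ of it using $\|g\|_{L^\infty}$ together with the integrability of the logarithmic singularity, and control the rest by $\ln(1+s+r)$---matches the paper's, but your decomposition of the integrand is genuinely different and somewhat more elementary. You factor $1+2r|x-r|^{-1}\le(1+2r)(1+|x-r|^{-1})$ up front, which cleanly separates the growth in $r$ from the singularity. The paper instead writes $1+2r|x-r|^{-1}\le\bigl(1+2r\min(\delta^{-1},|x-r|^{-1})\bigr)\max(\delta|x-r|^{-1},1)$ with the $x$-dependent threshold $\delta=(1+|x|+s)^{-1}$; this requires a small case analysis ($|x-r|\ge 1$ versus $|x-r|\le 1$) to show the first factor is at most $2(1+s+r)^2$, but has the payoff that the singular piece integrates exactly, $\int_{\R}\ln\max(\delta|x-r|^{-1},1)\,dr=2\delta$, with no logarithm. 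Your singular piece $\int_0^\delta\ln(1+u^{-1})\,du\sim\delta|\ln\delta|$ carries an extra logarithm, which is precisely why you must take $\delta=(1+s)^{-2}$ (or any power strictly greater than $1$) rather than $(1+s)^{-1}$; you identified this balancing correctly, and both routes land on the same bound.
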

\begin{proof}
Denote
 $$ \delta=(1+|x|+s)^{-1}.$$
 When $|x-r|\geq 1,\ r>0$, we bound that
\begin{align*}
1+2r \min (\delta^{-1},|x-r|^{-1})\leq 1+2r\leq (1+r)^2\leq 2(1+s+r)^2.
\end{align*}
Otherwise
if $|x-r|\leq 1,\ r>0$ (in particular $|x|\leq 1+r$), we have
\begin{align*}
1+2r \min(\delta^{-1},|x-r|^{-1}) \leq 1+2r(1+|x|+s)\leq 1+2r(2+r+s)\leq 2(1+s+r)^2.
\end{align*}
 Thus for $r>0$, we always have
 \begin{align*}
 1+2r \min(\delta^{-1},|x-r|^{-1})\leq 2(1+s+r)^2.
 \end{align*}
  Note that
  \begin{align*}
  &1+2r |x-r|^{-1}\leq (1+2r \min(\delta^{-1},|x-r|^{-1}))\cdot \max(\delta |x-r|^{-1},1).
  \end{align*}
  We therefore can show that
  \begin{align*}
&\int_{r>0}g(r)(1+\ln(1+2r |x-r|^{-1}))dr
\\
&\leq \int_{r>0}g(r)(1+\ln(1+2r \min(\delta^{-1},|x-r|^{-1})))dr+\int_{r>0}g(r)\ln\max(\delta |x-r|^{-1},1)dr\\
&\leq \int_{r>0}g(r)(1+\ln(2(1+s+r)^2))dr+\|g\|_{L^{\infty} }\int_{\R}\ln\max(\delta |x-r|^{-1},1)dr\\
&\leq 2\int_{r>0}g(r)(1+\ln(1+s+r))dr+2\|g\|_{L^{\infty} }(1+s)^{-1}.
\end{align*}
Here for the last step, we used
\begin{align*}
&\int_{\R}\ln\max(\delta/|x-r|,1)dr =2 \int_{0}^{\delta}\ln (\delta r^{-1}) dr=2\delta\leq 2(1+s)^{-1}.
\end{align*}
We thus finished the proof for the Lemma.
\end{proof}

With the above preparations, we are now ready to establish the following improved decay estimates for the solution for $p>\frac{11}{3}$, which together with Proposition 4.2 implies Theorem \ref{thm:main:pd}.
\begin{Prop}
When $\frac{11}{3}<p<5$, the solution to \eqref{eq:NLW:semi:2d} verifies the following sharp time decay estimate
\begin{align*}
|\phi(t, x)|\leq C(1+t+|x|)^{-\frac{1}{2}}
\end{align*}
for some constant $C$ depending only on $p,\ \mathcal{E}_{0, 2},\ \mathcal{E}_{1, 0}$.
\end{Prop}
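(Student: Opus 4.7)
The plan is to exploit the representation formula \eqref{eq:rep:wave:2D}, in the spirit of Moncrief, to write $\phi = \phi_L + \phi_{NL}$, where $\phi_L$ is the linear evolution of the initial data and $\phi_{NL}$ is the Duhamel integral against $-|\phi|^{p-1}\phi$. Standard pointwise decay for 2D linear waves, applied to data in the conformal energy class, already gives $|\phi_L(t,x)|\lesssim (1+t+|x|)^{-1/2}$, so the task reduces to proving the same pointwise decay for the nonlinear piece and then closing a bootstrap on
$$M(T):=\sup_{0\le \tau\le T,\,x\in\mathbb{R}^2}(1+\tau+|x|)^{1/2}|\phi(\tau,x)|.$$

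For $\phi_{NL}$, after the substitution $z=x+(t-s)y$ one has
$$|\phi_{NL}(t,x)|\lesssim \int_0^t\int_{|z-x|\le t-s}\frac{|\phi(s,z)|^p}{\sqrt{(t-s)^2-|z-x|^2}}\,dz\,ds.$$
I would bound $|\phi(s,z)|$ by the spherical supremum $\phi_{*}(s,|z|)$ of \eqref{defphi*}, pass to polar coordinates for $z$ centered at the origin, and notice that the angular factor collapses to $(2rR)^{-1/2}F(A)$ with $R=|x|$, $r=|z|$, and $A=((t-s)^2-r^2-R^2)/(2rR)$. Lemma \ref{lem:log2} then controls $F(A)$ by a logarithm that is singular only near the past light cone $t-s\approx r+R$, i.e.\ only as $|A-1|\to 0$; Lemma \ref{lem:log1}, applied in the $r$-variable with the choice $x=t-s-R$, trades this singular log for the mild $\ln(1+t+r)$ at the cost of a single $L^\infty$ remainder which is absorbed into $M(T)$.

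To evaluate the remaining spatial integral I would interpolate
$$\phi_{*}^{p}=\phi_{*}^{(p+3)/2}\cdot\phi_{*}^{(p-3)/2}\le M(T)^{(p-3)/2}(1+s+r)^{-(p-3)/4}\phi_{*}(s,r)^{(p+3)/2},$$
which is legal because $p>3$, and then absorb $\phi_{*}^{(p+3)/2}$ into the weighted bound $\int_0^\infty(1+s+r)^{(p-1)/2}\phi_{*}(s,r)^{(p+3)/2}\,dr\lesssim 1$ from Lemma \ref{lem:decay}. Once the $r$-integral is carried out, the time integral schematically reduces to $\int_0^t(1+s)^{1/2-(3p-5)/4}\ln(1+t+s)\,ds$ multiplied by $(1+R)^{-1/2}$, and the exponent $1/2-(3p-5)/4<-1$ is finite exactly when $p>11/3$. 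Since $(p-3)/2<1$ for $p<5$, this produces an inequality of the form $M(T)\lesssim 1+M(T)^{(p-3)/2}$, and a standard continuity argument then bounds $M(T)$ uniformly in $T$, which gives the desired sharp pointwise decay.

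The main obstacle I expect is the careful region decomposition: the polar weight $(rR)^{-1/2}$ is singular at $r=0$ or $R=0$, and $F(A)$ is singular along the past light cone $t-s\approx r+R$. Lemmas \ref{lem:log2} and \ref{lem:log1} are precisely tailored to dissolve these singularities into $L^\infty$ pieces that feed back into $M(T)$; nevertheless, the bookkeeping of which weights survive in the near-cone, near-axis, and far-exterior regimes, and the verification that the surviving time integral converges exactly at the threshold $p=11/3$, is where the delicacy of the argument lies.
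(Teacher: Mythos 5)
Your proposal is correct and follows essentially the same route as the paper's proof: the representation formula, reduction to the spherical supremum $\phi_*$, the angular integral collapsing to $(2rr_0)^{-1/2}F(A)$ controlled by Lemmas \ref{lem:log2} and \ref{lem:log1}, the interpolation $\phi_*^p=\phi_*^{(p-3)/2}\phi_*^{(p+3)/2}$ fed into Lemma \ref{lem:decay}, the resulting time integrand $(1+s)^{(7-3p)/4}$ up to logarithms giving the threshold $p>\tfrac{11}{3}$, and a Gronwall/continuity closure using $(p-3)/2<1$. The one point you flag but leave open---the degeneracy of the $(rr_0)^{-1/2}$ weight as $r_0=|x_0|\to 0$, where the bound $|\phi|\lesssim r_0^{-1/2}$ is useless---is handled in the paper by invoking the improved interior decay $|\phi|\lesssim (1+t)^{-\frac{p-1}{4}+\ep}\leq (1+t)^{-\frac12}$ of Proposition \ref{prop:ptdecay:allp} on the region $|x_0|\leq \tfrac12\max(1,t_0)$, which is available precisely because $p>3$.
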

\begin{proof}
Let's fix time $t>0$. Define
\begin{align*}
f(t)=\sup\limits_{x\in\mathbb{R}^2} |\phi(t, x)(1+t)^{\frac{1}{2}}|,\quad f_*(t)=\sup\limits_{x\in\mathbb{R}^2} |\phi(t, x)(1+t+|x|)^{\frac{1}{2}}|.
\end{align*}
In view of the rough decay estimate of Proposition \ref{prop:ptdecay:allp}, it is obvious that $f(t)$ is finite for all $t>0$. By the definition of $\phi_*$ (in \eqref{defphi*}) and $f$, we have
\begin{align*}
\phi_*(t, r)\leq f(t)(1+t)^{-\frac{1}{2}},
\end{align*}
which together with Lemma \ref{lem:decay} implies that
\begin{align}
\label{eq:trp}
\int_0^{+\infty}(1+t+r)^{\frac{p-1}{2}}|\phi_*(t, r)|^{p}dr&\leq \left|\sup_{r>0}\phi_*(t, r)\right|^{\frac{p-3}{2}}\int_0^{+\infty}(1+t+r)^{\frac{p-1}{2}}|\phi_*(t, r)|^{\frac{p+3}{2}}dr\\
\nonumber&\les |f(t)(1+t)^{-\frac{1}{2}}|^{\frac{p-3}{2}}.
\end{align}
Using the rough decay estimate of Proposition \ref{prop:ptdecay:allp} again and in view of the definition of $\phi_*$, we have\begin{align*}
\phi_*(t, r) &\les 
(1+t+r)^{-\frac{p-1}{8}+\ep},
\\
 r^{\frac{1}{2}}\phi_*^p(t, r) &\les
(1+t+r)^{-\frac{p(p-1)}{8}+\frac{1}{2}+p\ep}.
\end{align*}
For the case when $\frac{11}{3}<p<5$, we can choose $ \epsilon$ such that $$0<\ep<\frac{p-1}{8}-\frac{1}{p}.$$
In particular we conclude that
\begin{align}
\label{eq:rp}
r^{\frac{1}{2}}\phi_*^p(t, r)\les
(1+t+r)^{-\frac{1}{2}}.
\end{align}
Now for any point $(t_0, x_0)\in\mathbb{R}^{1+2}$, $t_0\geq 0,$ denote $r_0=|x_0|$. In view of the representation formula for linear wave equation, the solution $\phi$ to the semilinear wave equation \eqref{eq:NLW:semi:2d} verifies
\begin{equation}
\label{eq:large:p:1}
  |\phi(t_0, x_0)|\les
  (1+t_0+r_0)^{-\frac{1}{2}}+\int_0^{t_0}\int_{|x_0-y|<t_0-s}\frac{|\phi|^p(s, y)}{\sqrt{(t_0-s)^2-|x_0-y|^2}}dy ds.
\end{equation}Using polar coordinate $ y=r(\cos\theta, \sin\theta)$ and the definition of $\phi_*$ we have
\begin{align*}
&\int_{ |x_0-y|<t_0-s } \frac{|\phi|^p(s, y)}{\sqrt{(t_0-s)^2-|x_0-y|^2}}dy \\
\leq& \int_{r>0}\int_{\theta\in I(r)}\frac{\phi_*^p(s, r)}{\sqrt{(t_0-s)^2-|x_0-r(\cos\theta, \sin\theta)|^2}}rd\theta dr,
\end{align*}
where (for fixed $x_0,t_0,s$)
\begin{align*}
& I(r)=\{\theta\in[-\pi,\pi): (t_0-s)>|x_0-r(\cos\theta, \sin\theta)|\}.
\end{align*}
Note that
\begin{align*}
|x_0-r(\cos\theta, \sin\theta)|^2=r_0^2+r^2 -2rr_0 \cos(\theta_0-\theta),\quad x_0=r_0(\cos\theta_0, \sin\theta_0).
\end{align*}Thus we can estimate that
\begin{align*}
&\int_{\theta\in I(r)}\frac{1}{\sqrt{(t_0-s)^2-|x_0-r(\cos\theta, \sin\theta)|^2}}d\theta\\
=& \int_{\theta\in I(r)}\frac{1}{\sqrt{(t_0-s)^2-r_0^2-r^2 +2rr_0 \cos(\theta_0-\theta)}}d\theta\\
=& \int_{\theta\in \widetilde{I}(r)}\frac{1}{\sqrt{(t_0-s)^2-r_0^2-r^2 +2rr_0 \cos\theta}}d\theta\\
=&\frac{F(2^{-1}r^{-1}r_0^{-1}((t_0-s)^2-r_0^2-r^2))}{\sqrt{2rr_0 }}.
\end{align*}
Here (for fixed $x_0,t_0,s$) the function $F$ is defined in Lemma \ref{lem:log2} and
\begin{align*}
& \widetilde{I}(r)=\{\theta\in[-\pi,\pi):(t_0-s)^2-r_0^2-r^2 +2rr_0 \cos\theta>0\}.
\end{align*}
By using Lemma \ref{lem:log2}, we then can show that
\begin{align*}
&\int_{|x_0-y|<t_0-s}\frac{|\phi|^p(s, y)}{\sqrt{(t_0-s)^2-|x_0-y|^2}}dy \\
\leq& \int_{r>0}\frac{\phi_*^p(s, r)}{\sqrt{2rr_0}} F( 2^{-1}r^{-1}r_0^{-1} ((t_0-s)^2-r_0^2-r^2) ) r dr\\
\leq& C\int_{r>0}\frac{\phi_*^p(s, r)}{\sqrt{2rr_0}}(1+\ln(1+2rr_0 |(t_0-s)^2-(r_0+r)^2|^{-1}) r dr\\
\leq& C\int_{r>0}\frac{\phi_*^p(s, r)}{\sqrt{2rr_0}}(1+\ln(1+2r/|t_0-s-r_0-r|))r dr.
\end{align*}
Here in the last step we used (for $t_0>s>0$)
\begin{align*}
& |(t_0-s)^2-(r_0+r)^2|=|t_0-s-r_0-r||t_0-s+r_0+r|\geq |t_0-s-r_0-r|r_0.
\end{align*}
Now using Lemma \ref{lem:log1} with $g(r)=r^{\frac{1}{2}}\phi_*^p(s, r)$, $x=t_0-s-r_0 $ as well as estimate \eqref{eq:rp} , we conclude that
\begin{equation}
\label{eq:p2}
\begin{split}
&\int_{|x_0-y|<t_0-s}\frac{|\phi|^p(s, y)}{\sqrt{(t_0-s)^2-|x_0-y|^2}}dy \\
&\les  \int_{r>0}\frac{\phi_*^p(s, r)}{\sqrt{2rr_0}}(1+\ln(1+s+r))r dr+ r_0^{-\frac{1}{2}}(1+s)^{-\frac{3}{2}}.
\end{split}
\end{equation}
Note that for positive constant $\gamma>0$, the function $z^{-\gamma}(1+\gamma\ln z)$ of $z$ is decreasing for $z>1$. Hence  for $s>0,\quad r>0$ and $ \gamma=p/2-1$, we can show that
\begin{align*}
r^{\frac{1}{2}}(1+s)^{\frac{p-2}{2}}(1+\gamma\ln(1+s+r))&\leq r^{\frac{1}{2}}(1+s+r)^{\frac{p-2}{2}}(1+\gamma\ln(1+s))\\&\leq (1+s+r)^{\frac{p-1}{2}}(1+\gamma\ln(1+s)),
\end{align*}
which together with estimates \eqref{eq:p2}, \eqref{eq:trp} leads to
\begin{align*}
&\int_{|x_0-y|<t_0-s}\frac{|\phi|^p(s, y)}{\sqrt{(t_0-s)^2-|x_0-y|^2}}dy \\
&\les (1+s)^{-\frac{p-2}{2}}r_0^{-\frac{1}{2}}(\ln(1+s)+1)\int_{r>0}(1+s+r)^{\frac{p-1}{2}}\phi_*^p(s, r)dr+r_0^{-\frac{1}{2}}(1+s)^{-\frac{3}{2}}\\
&\les (1+s)^{-\frac{p-2}{2}}r_0^{-\frac{1}{2}}(\ln(1+s)+1)(1+s)^{-\frac{p-3}{4}}|f(s)|^{\frac{p-3}{2}}+r_0^{-\frac{1}{2}}(1+s)^{-\frac{3}{2}}\\
&\les r_0^{-\frac{1}{2}}(\ln(1+s)+1)(1+s)^{\frac{7-3p}{4}}|f(s)|^{\frac{p-3}{2}}+r_0^{-\frac{1}{2}}(1+s)^{-\frac{3}{2}}.
\end{align*}
Multiply both side of \eqref{eq:large:p:1} with $r_0^{\frac{1}{2}}$, we have
\begin{align*}
|\phi(t_0, x_0)|r_0^{\frac{1}{2}}\les 1 +\int_0^{t_0}  ((\ln(1+s)+1)(1+s)^{\frac{7-3p}{4}}|f(s)|^{\frac{p-3}{2}}+(1+s)^{-\frac{3}{2}})ds.
\end{align*}
On the other hand if $r_0=|x_0|\leq \frac{1}{2}\max(1,t_0)$, then by Proposition \ref{prop:ptdecay:allp} we have the improved decay estimates
\begin{align*}
|\phi(t_0, x_0)|\les
(1+t_0)^{-\frac{p-1}{4}+\ep}\les (1+t_0)^{-\frac{1}{2}}.
\end{align*}
Hence taking supreme in terms of $x_0$ and in view of the definition for $f(s),\ f_*(s)$, we derive that
\begin{align*}
f(t_0)\leq f_*(t_0)\les 1+\int_0^{t_0}  (\ln(1+s)+1)(1+s)^{\frac{7-3p}{4}}|f(s)|^{\frac{p-3}{2}}ds.
\end{align*}
For the case when $\frac{11}{3}<p<5$, we in particular have that
$$\frac{3p-7}{4}>1>\frac{p-3}{2}>0, $$
which together with the previous estimate leads to the conclusion that
$$
f_*(t_0)\les 1.
$$
The Proposition then follows by our convention that the implicit constant relies only on $p$ and the initial data $\mathcal{E}_{0, 2}$, $\mathcal{E}_{1, 0}$.
\end{proof}

\bibliography{shiwu}{}
\bibliographystyle{plain}

\end{document}